\newcommand{\mathsmaller}[1]{#1}
\setlist[itemize]{leftmargin=0.35in}
\newcommand{\Hn}[2]{
     \ifthenelse{\equal{#2}{1}}{H_{#1}}{H_{#1}^{\left(#2\right)}}
}
\DeclareMathOperator{\ab}{ab}
\DeclareMathOperator{\ConvP}{P}
\DeclareMathOperator{\ConvQ}{Q}
\title[Jacobi-Type Continued Fractions and Congruences for Binomial Coefficients]{
       Jacobi-Type Continued Fractions and Congruences for 
       Binomial Coefficients Modulo Integers $h \geq 2$} 
\author{Maxie D. Schmidt \\ 
        \href{mailto:maxieds@gmail.com}{maxieds@gmail.com}}         
\date{}
\address{University of Washington \\ 
        Department of Mathematics \\ 
        Padelford Hall \\ 
        Seattle, WA 98195 \\ 
        USA 
} 
\begin{document}

\begin{abstract} 
We prove two new forms of Jacobi-type J-fraction expansions generating the 
binomial coefficients, $\binom{x+n}{n}$ and $\binom{x}{n}$, over all $n \geq 0$. 
Within the article we establish new forms of integer congruences for these 
binomial coefficient variations modulo any (prime or composite) $h \geq 2$ and 
compare our results with existing known congruences for the binomial coefficients 
modulo primes $p$ and prime powers $p^k$. 
We also prove new exact formulas for these binomial coefficient cases from the 
expansions of the $h^{th}$ convergent functions to the infinite J-fraction 
series generating these coefficients for all $n$. 
\end{abstract} 

\maketitle

\section{Introduction} 

\subsection{Congruences for Binomial Coefficients Modulo Primes and Prime Powers} 
\label{subSection_KnownResults_BinomCoeffs} 

There are many well-known results providing congruences for the 
binomial coefficients modulo primes and prime powers. 
For example, we can state \emph{Lucas's theorem} in the following form 
for $p$ prime and $n, m \in \mathbb{N}$ where 
$n = n_0+n_1 p+\cdots+n_d p^d$ and $m = m_0+m_1 p+\cdots+m_d p^d$ for 
$0 \leq n_i, m_i < p$ \citep{GRANVILLE}: 
\begin{align*}
\tag{Lucas' Theorem} 
\binom{n}{m} & \equiv \binom{n_0}{m_0} \binom{n_1}{m_1} \cdots \binom{n_d}{m_d} && 
     \pmod{p}. 
\end{align*} 
We also have known results providing decompositions, or reductions of order, of the 
next binomial coefficients modulo the prime powers, $p^k$ \citep{MESTROVIC}. 
\begin{align*} 
\binom{n p^k+n_0}{m p^k+m_0} & \equiv \binom{np}{mp} \binom{n_0}{m_0} && 
     \pmod{p^k} 
\end{align*} 
Similarly, we can decompose linear and quadratic (and more general) polynomial 
inputs to the lower binomial coefficient indices as the congruence factors given by 
\citep[\S 3.2]{MESTROVIC} 
\begin{align*} 
\binom{np}{rp+s} & \equiv (r+1) \binom{n}{r+1} \binom{p}{s} && \pmod{p^2} \\ 
\binom{np}{mp^2+rp+s} & \equiv (m+1) \binom{n}{m+1} \binom{p^2}{rp+s} 
     && \pmod{p^3}. 
\end{align*} 
Within the context of this article, 
we are motivated to establish meaningful, non-trivial integer congruences for 
the two indeterminate binomial coefficient sequence variants, 
$\binom{x+n}{n}$ and $\binom{x}{n}$, modulo both prime and composite bases 
$h \geq 2$ by using 
new expansions of the Jacobi-type continued fractions generating these 
binomial coefficient variants that we prove in 
Section \ref{Section_BinomCoeffs_xpnn} and in Section \ref{Section_BinomCoeffs_xn}. 
The next subsection establishes related known continued fractions and 
integer congruence properties for the general forms of Jacobi-type J-fractions 
of which our new results are special cases. 

\subsection{Jacobi-Type Continued Fraction Expansions} 
\label{subSection_Intro_J-fraction_exps} 

\subsubsection*{Continued Fractions Generating the Binomial Coefficients} 

In this article, 
we study new properties and congruence relations satisfied by the 
integer-order binomial coefficients through two specific, and 
apparently new, \emph{Jacobi-type continued fraction} expansions of a 
formal power series in $z$. The expansions of these \emph{J-fractions} 
are similar in form 
to a known \emph{Stieltjes-type} continued fraction, or \emph{S-fraction}, 
expansion given in Wall's book as 
\citep[p.\ 343]{WALL-CFRACS} 
\begin{align*} 
\tag{Binomial Series S-Fraction} 
(1+z)^k & = \cfrac{1}{
                 1-\cfrac{kz}{
                      1 + \cfrac{\frac{1 \cdot (1+k)}{1 \cdot 2} z}{
                           1 + \cfrac{\frac{1 \cdot (1-k)}{2 \cdot 3} z}{
                                1 + \cfrac{\frac{2 (2+k)}{3 \cdot 4} z}{ 
                                     1 + \cdots. 
                                } 
                           }
                      }
                 }
            }
\end{align*} 

\subsubsection*{Jacobi-Type J-Fractions for Generalized Factorial Functions} 

The loosely-termed ``\emph{non-exponential}'' forms of a generalized class of 
binomial-coefficient-related \emph{generalized factorial functions}, 
$p_n(\alpha, R)$, defined as 
\begin{align*} 
p_n(\alpha, R) & := 
     R(R+\alpha)(R+2\alpha) \cdots (R + (n-1)\alpha), 
\end{align*} 
where the special cases of $n! = p_n(-1, n)$, 
$\binom{x}{n} \equiv (-1)^n (-x)_n / n!$, and 
$p_n(\alpha, R) / n! \equiv (-\alpha)^n \cdot \binom{-R / \alpha}{n}$ when 
$(x)_n = x(x+1)\cdots(x+n-1)$ denotes the \emph{Pochhammer symbol} 
are studied in the reference \citep{MULTIFACT-CFRACS}. 
Exponential generating functions for the generalized symbolic product 
functions, $p_n(\alpha, R)$, with respect to $n$ are known and 
given in closed-form by 
\begin{align*} 
\sum_{n \geq 0} p_n(\alpha, R+1) \frac{z^n}{n!} & = 
     \left(1 - \alpha z\right)^{-(R+1) / \alpha}, 
\end{align*} 
where the corresponding generalized forms of the 
\emph{Stirling numbers of the first kind}, or 
\emph{$\alpha$-factorial coefficients}, are defined by the 
generating functions \citep[\cf \S 2-3]{MULTIFACTJIS} 
\begin{align*} 
\sum_{n \geq 0} [R^m] p_n(\alpha, R+1) \frac{z^n}{n!} & = 
     \frac{(-1)^m \alpha^{-m}}{m!} \times (1-\alpha z)^{\frac{1}{\alpha}} 
     \times \Log(1-\alpha z)^m,\ \text{ for } m \in \mathbb{Z}^{+}. 
\end{align*} 
The power series expansions of the ordinary generating functions for these 
factorial functions, which typically converge only for $z = 0$ when the 
parameter $R$ is a function of $n$, are defined formally in 
\citep{MULTIFACT-CFRACS} by infinite J-fractions of the form 
\begin{align*} 
\ConvGF{\infty}{\alpha}{R}{z} & \phantom{:} = 
     \cfrac{1}{1 - R \cdot z - 
     \cfrac{\alpha R \cdot z^2}{ 
            1 - (R+2\alpha) \cdot z -
     \cfrac{2\alpha (R + \alpha) \cdot z^2}{ 
            1 - (R + 4\alpha) \cdot z - 
     \cfrac{3\alpha (R + 2\alpha) \cdot z^2}{ 
     \cdots.}}}} 
\end{align*} 
These typically divergent ordinary generating functions are 
usually only expanded as 
power series in $z$ by ``\emph{regularized}'' \emph{Borel} sums 
involving reciprocal powers of $z$ (and $\alpha z$) such as those 
given as examples in the introduction to \citep{MULTIFACT-CFRACS}. 

\subsubsection*{Properties of the Expansions of General J-Fraction Series} 

More generally, Jacobi-type \emph{J-fractions} correspond to power series defined by 
infinite continued fraction expansions of the form 
\begin{align*} 
\tag{J-Fraction Expansions} 
J^{[\infty]}(z) & = 
     \cfrac{1}{1-c_1z-\cfrac{\ab_1 z^2}{1-c_2z- 
     \cfrac{\ab_2 z^2}{\cdots},}}
\end{align*} 
for some sequences $\{ c_i \}_{i \geq 1}$ and 
$\{ \ab_i \}_{i \geq 1}$, and 
some (typically formal) series variable $z \in \mathbb{C}$ 
\citep[\cf \S 3.10]{NISTHB} \citep{WALL-CFRACS,FLAJOLET80B,FLAJOLET82}. 
The formal series enumerated by special cases of the truncated and infinite 
continued fraction series of this form include \emph{ordinary} 
(as opposed to typically closed-form \emph{exponential}) 
\emph{generating functions} 
for many one and two-index combinatorial sequences including the 
generalized factorial functions studied in the references 
\citep{FLAJOLET80B,FLAJOLET82,GFLECT,MULTIFACT-CFRACS}. 

The $h^{th}$ convergents, $\Conv_h(z)$, 
to the J-fraction expansions of the form in the 
previous equation have several useful characteristic properties: 
\begin{itemize} 

\item[(A)] 
The $h^{th}$ convergent functions exactly enumerate the coefficients of the 
infinite continued fraction series as 
$[z^n] J^{[\infty]}(z) = [z^n] \Conv_h(z)$ for all $0 \leq n < 2h$. 

\item[(B)] 
The convergent function component numerator and denominator sequences 
are each easily defined recursively by the same second-order 
recurrence in $h$ with differing initial conditions. 
The $h^{th}$ convergent functions are always rational in $z$ 
which implies an ``\emph{eventually periodic}'' nature to their 
coefficients, as well as $h$-order finite difference equations satisfied by the 
coefficients of these truncated series approximations. 

\item[(C)] 
If $M_h := \ab_1 \cdots \ab_h$ denotes the \emph{$h^{th}$ modulus} of the 
generalized J-fraction expansion defined above, the 
sequence of coefficients enumerated by the 
$h^{th}$ convergent function is eventually periodic modulo $M_h$ and 
satisfies a finite difference equation of order at most $h$ 
\citep[Thm.\ 1]{FLAJOLET82}. 
Moreover, 
if $M_m$ is integer-valued and $h \geq 2$ divides $M_m$ for some $m \geq h$, then 
$[z^n] J^{[\infty]} \equiv [z^n] \Conv_m(z) \pmod{h}$ \citep[\S 5.7]{GFLECT}. 

\end{itemize} 
Typically we are only interested in the congruences formed by the $h^{th}$ 
convergent functions for the coefficients of $z^n$ the infinite series, 
$J^{[\infty]}(z)$ in $z$ for $0 \leq n \leq h$, 
which are in fact exactly enumerated by these 
convergent functions. Other properties of the convergent 
functions, such as finite difference equations satisfied by their 
coefficients, suggest other non-obvious and non-trivial properties 
of the resulting congruences guaranteed by the convergent functions. 

\subsection{Organization of the Article} 

We provide two new forms of infinite J-fractions generating the binomial 
coefficients, $\binom{x+n}{n}$ and $\binom{x}{n}$, for $n \geq 0$ and any 
non-zero indeterminate $x$ in the next sections of the article. 
These new continued fraction results lead to new exact formulas and 
finite difference equations for these 
binomial coefficient variants, and new congruences for the binomial 
coefficients modulo \emph{any} (prime or composite) integers $h \geq 2$ whenever 
$\frac{1}{2} \binom{x+h-1}{h-1} \binom{x}{h-1} / \binom{2h-3}{h-2}^{2} 
 \in \mathbb{Z}$ and 
$h | \frac{1}{2} \binom{x+h-1}{h-1} \binom{x}{h-1} / \binom{2h-3}{h-2}^{2}$. 
Addition formulas for these J-fractions imply new 
identities for reductions of order of the upper index, $x$, comparable to the 
statements of Lucas' theorem and the other results modulo primes $p$ and 
prime powers $p^k$ given in Section \ref{subSection_KnownResults_BinomCoeffs}. 

The proofs of these new continued fraction expansions mostly follow by 
inductive arguments applied to known recurrence relations for the $h^{th}$ 
numerator and denominator convergent function sequences. 
The proofs that these infinite J-fraction expansions exactly enumerate our 
binomial coefficient variants of interest follow from the rationality of the 
$h^{th}$ convergent functions in $z$ for all $h \geq 1$. 
Consequences of the proofs of our main theorems include new exact formulas for the 
binomial coefficients, $\binom{x+n}{n}$ and $\binom{x}{n}$, and new 
congruence properties for these binomial coefficient forms. 
Since the proofs of the corresponding results in each case given in 
Section \ref{Section_BinomCoeffs_xpnn} and Section \ref{Section_BinomCoeffs_xn}, 
respectively, are so similar, we give 
careful proofs of the results for the case of the J-fractions generating 
$\binom{x+n}{n}$ in the first section, and 
choose to only state the analogous results for the case of $\binom{x}{n}$ 
in the second section below for clarity of exposition. 

\section{J-Fraction Expansions for the Binomial Coefficients, $\binom{x+n}{n}$} 
\label{Section_BinomCoeffs_xpnn} 

\begin{definition}[Component Sequences and Convergent Functions for $\binom{x+n}{n}$] 
\label{def_ComponentSeqs_ConvFns_BinomCoeff_xpnn} 
For a non-zero indeterminate $x$, let the sequences, 
$c_{x,i}^{(1)}$ and $\ab_{x,i}^{(1)}$, 
be defined over $i \geq 1$ as follows: 
\begin{align*} 
c_{x,i}^{(1)} & := -\frac{1}{(2i-1)(2i-3)}\left(1+2(i-2)i-x\right) \\ 
\ab_{x,i}^{(1)} & := 
     \begin{cases} 
     -\frac{1}{4 (2i-3)^2} (x-i+2)(x+i-1) & \text{ if } i \geq 3 \\ 
     -\frac{1}{2}x(x+1) & \text{ if } i = 2 \\ 
     0 & \text{ otherwise. }
     \end{cases} 
\end{align*} 
We then define the $h^{th}$ convergent functions, 
$\Conv_{1,h}(x, z) := \ConvP_{1,h}(x, z) / \ConvQ_{1,h}(x, z)$, 
through the component numerator and denominator functions given recursively by 
\begin{align} 
\label{eqn_ConvFns_BinomCoeff_xpnn} 
\ConvP_{1,h}(x, z) & = (1-c_{x,h}^{(1)}z) \ConvP_{1,h-1}(x, z) - 
     \ab_{x,h}^{(1)} z^2 \ConvP_{1,h-1}(x, z) + \Iverson{h = 1} \\ 
\notag 
\ConvQ_{1,h}(x, z) & = (1-c_{x,h}^{(1)}z) \ConvQ_{1,h-1}(x, z) - 
     \ab_{x,h}^{(1)} z^2 \ConvQ_{1,h-1}(x, z) + 
     (1-c_{x,1}^{(1)}z) \Iverson{h = 1} + \Iverson{h = 0}. 
\end{align} 
Listings of the first several cases of the numerator and denominator 
convergent functions, $\ConvP_{1,h}(x, z)$ and $\ConvQ_{1,h}(x, z)$, 
are given in Table \ref{table_ConvP2hxz_SpCases} and 
Table \ref{table_ConvQ2hxz_SpCases}, respectively. 
\end{definition} 

\begin{table} 
\begin{framed} 
\centering\small

\begin{tabular}{||c|l||} \hline\hline 
$h$ & $\ConvP_{1,h}(x, z)$ \\ \hline
0 & $0$ \\ 
1 & $1$ \\
2 & $1+\frac{1-x}{3}z$ \\
3 & $1-\frac{2}{5} (x-2) z+\frac{1}{20} (x-2) (x-1)z^2$ \\
4 & $1-\frac{3}{7} (x-3) z+\frac{1}{14} (x-3) (x-2) z^2- 
     \frac{1}{210} (x-3) (x-2) (x-1)z^3$ \\
5 & $1-\frac{4}{9} (x-4) z+\frac{1}{12} (x-4) (x-3) z^2- 
     \frac{1}{126} (x-4) (x-3) (x-2) z^3 +\frac{(x-4) (x-3) (x-2) (x-1)}{3024}z^4$ \\
6 & $1-\frac{5}{11} (x-5) z+\frac{1}{11} (x-5) (x-4) z^2- 
     \frac{1}{99} (x-5) (x-4) (x-3) z^3+ 
     \frac{(x-5) (x-4) (x-3) (x-2)}{1584} z^4$ \\ 
  & $\phantom{1}-
     \frac{(x-5) (x-4) (x-3) (x-2) (x-1)}{55440} z^5$ \\
\hline\hline 
\end{tabular} 

\bigskip 

\begin{tabular}{||c|l||} \hline\hline 
$h$ & $(2h-1)! \cdot \ConvP_{1,h}(x, z)$ \\ \hline
0 & $0$ \\ 
1 & $1$ \\
2 & $6-2 (x-1)z$ \\
3 & $120 -48 (x-2) z^2+6 (x-2) (x-1) z^2$ \\
4 & $5040-2160 (x-3) z+360 (x-3) (x-2) z^2-24 (x-3) (x-2) (x-1)z^3$ \\
5 & $362880-161280 (x-4) z+30240 (x-4) (x-3) z^2- 
     2880 (x-4) (x-3) (x-2) z^3$ \\ 
  & $\phantom{362880} + 
     120 (x-4) (x-3) (x-2) (x-1)z^4$ \\
6 & $39916800-18144000 (x-5) z+3628800 (x-5) (x-4) z^2- 
     403200 (x-5) (x-4) (x-3) z^3$ \\ 
  & $\phantom{39916800} + 
     25200 (x-5) (x-4) (x-3) (x-2) z^4- 
     720 (x-5) (x-4) (x-3) (x-2) (x-1) z^5$ \\ 
\hline\hline 
\end{tabular} 

\bigskip 

\captionof{table}{The Numerator Convergent Functions, $\ConvP_{1,h}(x, z)$, 
                  Generating the Binomial Coefficients, $\binom{x+n}{n}$} 
\label{table_ConvP2hxz_SpCases} 

\end{framed} 
\end{table} 

\begin{table}[ht] 
\begin{framed} 
\centering\small

\begin{tabular}{||c|l||} \hline\hline 
$h$ & $\ConvQ_{1,h}(x, z)$ \\ \hline
1 & $1 + (x+1) z$ \\
2 & $1+\frac{2}{3} (x+2) z+\frac{1}{6} (x+1) (x+2)z^2$ \\
3 & $1+\frac{3}{5} (x+3) z+\frac{3}{20} (x+2) (x+3) z^2+ 
     \frac{1}{60} (x+1) (x+2) (x+3)z^3$ \\
4 & $1+\frac{4}{7} (x+4) z+\frac{1}{7} (x+3) (x+4) z^2+ 
     \frac{2}{105} (x+2) (x+3) (x+4) z^3$ \\ 
  & $\phantom{1} + 
     \frac{1}{840} (x+1) (x+2) (x+3) (x+4)z^4$ \\
5 & $1+\frac{5}{9} (x+5) z+\frac{5}{36} (x+4) (x+5) z^2+ 
     \frac{5}{252} (x+3) (x+4) (x+5) z^3+ 
     \frac{5 (x+2) (x+3) (x+4) (x+5)}{3024} z^4$ \\ 
  & $\phantom{1} + 
     \frac{(x+1) (x+2) (x+3) (x+4) (x+5)}{15120} z^5$ \\
6 & $1+\frac{6}{11} (x+6) z+\frac{3}{22} (x+5) (x+6) z^2+ 
     \frac{2}{99} (x+4) (x+5) (x+6) z^3$ \\ 
  & $\phantom{1} + 
     \frac{1}{528} (x+3) (x+4) (x+5) (x+6) z^4+ 
     \frac{(x+2) (x+3) (x+4) (x+5) (x+6)}{9240} z^5$ \\ 
  & $\phantom{1} + 
     \frac{(x+1) (x+2) (x+3) (x+4) (x+5) (x+6)}{332640} z^6$ \\
\hline\hline 
\end{tabular} 

\bigskip 

\begin{tabular}{||c|l||} \hline\hline 
$h$ & $(2h-1)! \cdot \ConvQ_{1,h}(x, z)$ \\ \hline
1 & $1+(x+1)z$ \\
2 & $6+4 (x+2) z+(x+1) (x+2)z^2$ \\
3 & $120+72 (x+3) z+18 (x+2) (x+3) z^2+2 (x+1) (x+2) (x+3)z^3$ \\
4 & $5040+2880 (x+4) z+720 (x+3) (x+4) z^2+ 
     96 (x+2) (x+3) (x+4) z^3$ \\ 
  & $\phantom{5040} + 
     6 (x+1) (x+2) (x+3) (x+4) z^4$ \\
5 & $362880+201600 (x+5) z+50400 (x+4) (x+5) z^2+ 
     7200 (x+3) (x+4) (x+5) z^3$ \\ 
  & $\phantom{3622880} + 
     600 (x+2) (x+3) (x+4) (x+5) z^4+ 
     24 (x+1) (x+2) (x+3) (x+4) (x+5) z^5$ \\
6 & $39916800+21772800 (x+6) z+5443200 (x+5) (x+6) z^2+ 
     806400 (x+4) (x+5) (x+6) z^3$ \\ 
  & $\phantom{39916800} + 
     75600 (x+3) (x+4) (x+5) (x+6) z^4+ 
     4320 (x+2) (x+3) (x+4) (x+5) (x+6) z^5$ \\ 
  & $\phantom{39916800} + 
     120 (x+1) (x+2) (x+3) (x+4) (x+5) (x+6) z^6$ \\
\hline\hline 
\end{tabular} 

\bigskip 

\captionof{table}{The Denominator Convergent Functions, $\ConvQ_{1,h}(x, z)$, 
                  Generating the Binomial Coefficients, $\binom{x+n}{n}$} 
\label{table_ConvQ2hxz_SpCases} 

\end{framed} 
\end{table} 

\begin{prop}[Formulas for the Numerator Convergent Functions] 
\label{prop_NumConvFn_Formula}
For all $h \geq 1$ and indeterminate $x$, the 
numerator convergent functions, $\ConvP_{1,h}(x, z)$, 
have the following formulas: 
\begin{align*} 
\ConvP_h(x, z) & = 
     \sum_{n=0}^{h-1} \binom{x+n-h}{n} 
     \frac{(h-1)!}{\left(h-1-n\right)!} \cdot 
     \frac{(2h-1-n)!}{(2h-1)!} \cdot (-z)^n \\ 
     & = 
     \sum_{n=0}^{h-1} \binom{x+n-h}{n} \binom{h-1}{n} \binom{2h-1}{n}^{-1} 
     \cdot (-z)^n. 
\end{align*} 
\end{prop}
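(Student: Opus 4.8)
The two displayed formulas are term-by-term equal, since $\binom{h-1}{n}\binom{2h-1}{n}^{-1} = \tfrac{(h-1)!}{(h-1-n)!}\cdot\tfrac{(2h-1-n)!}{(2h-1)!}$ after cancelling the common $n!$, so it suffices to establish the second (binomial-ratio) form. The plan is to induct on $h$ using the standard second-order recurrence defining the numerator convergents, namely $\ConvP_{1,h} = (1-c_{x,h}^{(1)}z)\ConvP_{1,h-1} - \ab_{x,h}^{(1)}z^2\ConvP_{1,h-2}$ for $h \geq 2$, together with $\ConvP_{1,0} = 0$ and $\ConvP_{1,1} = 1$. Writing $\ConvP_{1,h}(x,z) = \sum_{n \geq 0} p_{h,n}(x)\,(-z)^n$ with $p_{h,n}(x) := \binom{x+n-h}{n}\binom{h-1}{n}\binom{2h-1}{n}^{-1}$, the proposition is exactly the assertion $[(-z)^n]\ConvP_{1,h} = p_{h,n}$, where $p_{h,n}$ vanishes for $n \geq h$ and the empty sum reproduces $\ConvP_{1,0} = 0$. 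The base cases $h = 0, 1$ hold by inspection, and I would note that $h = 2$ also follows directly from the recurrence because the $\ab_{x,2}^{(1)} z^2 \ConvP_{1,0}$ term vanishes.

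For the inductive step I would assume the formula for $\ConvP_{1,h-1}$ and $\ConvP_{1,h-2}$ and extract the coefficient of $(-z)^n$ from the recurrence, using $z = -(-z)$ so that $-c_{x,h}^{(1)}z = c_{x,h}^{(1)}(-z)$ and $-\ab_{x,h}^{(1)}z^2 = -\ab_{x,h}^{(1)}(-z)^2$. This reduces the entire proposition to the single scalar recurrence
\[
 p_{h,n} = p_{h-1,n} + c_{x,h}^{(1)}\,p_{h-1,n-1} - \ab_{x,h}^{(1)}\,p_{h-2,n-2},
\]
to be verified for all $h \geq 2$ and $0 \leq n \leq h-1$, where $p_{h-1,n-1}$ and $p_{h-2,n-2}$ are read off from the explicit formula at shifted indices.

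The heart of the argument, and the step I expect to be the main obstacle, is verifying this binomial identity. I would isolate the $x$-dependence: the four relevant $x$-parts $\binom{x+n-h}{n}$, $\binom{x+n-h+1}{n}$, $\binom{x+n-h}{n-1}$, $\binom{x+n-h}{n-2}$ all share the common product $D := (x-h+3)(x-h+4)\cdots(x+n-h)$ of $n-2$ consecutive factors. Pulling out $D/n!$ and then the further common linear factor $(x-h+2)$ — which, crucially, also divides $\ab_{x,h}^{(1)} = -\tfrac{1}{4(2h-3)^2}(x-h+2)(x+h-1)$ for $h \geq 3$, so that every one of the four terms retains it — collapses the recurrence to a single \emph{linear} polynomial identity in $x$ whose coefficients are the constants $\binom{h-1}{n}\binom{2h-1}{n}^{-1}$ and their index shifts, combined with the explicit linear form of $c_{x,h}^{(1)}$ and the quadratic $\ab_{x,h}^{(1)}$. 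Matching the coefficient of $x$ against the constant term then leaves just two purely numerical identities in $h$ and $n$, each a routine cancellation of factorials (equivalently, an application of Pascal's rule relating the shifted binomials). I would close by separately treating the low-order cases $n = 0$ (trivially $p_{h,0} = 1$) and $n = 1$, along with the boundary situations where $p_{h-2,n-2}$ drops out or the special index $i = 2$ value of $\ab_{x,i}^{(1)}$ is invoked, since the common-factor reduction presumes $n \geq 2$ and $h \geq 3$.
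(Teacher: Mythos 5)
Your proposal is correct and follows essentially the same route as the paper: induction on $h$ through the second-order convergent recurrence (which you correctly state with $\ConvP_{1,h-2}$ in the $z^2$ term, fixing the apparent typo in the paper's displayed equation \eqref{eqn_ConvFns_BinomCoeff_xpnn}), with base cases read off the tabulated convergents and the inductive step reduced to a three-term scalar binomial identity for each coefficient of $(-z)^n$, boundary indices treated separately. Your common-factor extraction and linearity-in-$x$ argument is just a cleaner bookkeeping of the same coefficient verification that the paper carries out by direct expansion.
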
 
\begin{proof} 
The proof follows from \eqref{eqn_ConvFns_BinomCoeff_xpnn} of 
Definition \ref{def_ComponentSeqs_ConvFns_BinomCoeff_xpnn} by induction on $h$. 
The claimed formula holds for $h = 0, 1, 2$ by the computations given in 
Table \ref{table_ConvP2hxz_SpCases}. 
Suppose that $h \geq 3$ and that the two equivalent formulas stated in the 
proposition for $\ConvP_{1,k}(x, z)$ are correct for all $k < h$. 
In particular, the stated formula holds when $k = h-1, h-2$. 
Then by expanding \eqref{eqn_ConvFns_BinomCoeff_xpnn} using our hypothesis, 
we have that 
\begin{align*} 
\ConvP_{1,k}(x, z) & = \left(1+\frac{(1+2h(h-2)-x)z}{(2h-1)(2h-3)}\right) \times 
     \sum_{n=0}^{h-2} \binom{x+n+1-h}{n} \binom{h-2}{n} \binom{2h-3}{n}^{-1} 
     (-z)^n \\ 
     & \phantom{=\ } + 
     \frac{(x+2-h)(x+h-1) z^2}{4 (2h-3)^2} \times 
     \sum_{n=0}^{h-3} \binom{x+n+2-h}{n} \binom{h-3}{n} \binom{2h-5}{n}^{-1} 
     (-z)^n \\ 
     & = 
     \sum_{n=0}^{h-1} \binom{x+n+1}{n} \binom{h-2}{n} \binom{2h-3}{n}^{-1} (-z)^n \\ 
     & \phantom{=\ } - 
     \frac{(1+2h(h-2)-x)}{(2h-1)(2h-3)} \sum_{n=1}^{h-1} 
     \binom{x+n-h}{n-1} \binom{h-2}{n-1} \binom{2h-3}{n-1}^{-1} (-z)^n \\ 
     & \phantom{=\ } + 
     \frac{(x+2-h)(x+h-1)}{4 (2h-3)^2} \sum_{n=2}^{h-1} 
     \binom{x+n-h}{n-2} \binom{h-3}{n-2} \binom{2h-5}{n-2}^{-1} (-z)^n \\ 
     & = 
     1 + \left(\binom{x+2-h}{1} \binom{h-2}{1} \binom{2h-3}{1}^{-1} - 
     \frac{(1+2h(h-2)-x)}{(2h-1)(2h-3)} \right) (-z) \\ 
     & \phantom{=\ } + 
     \Biggl(\frac{(x+2-h)(x+h-1)}{4 (2h-3)^2} 
     \binom{x-1}{h-3} \binom{h-3}{h-3} \binom{2h-5}{h-3}^{-1} \\ 
     & \phantom{=+\Biggl(\ } - 
     \binom{x-1}{h-2} \binom{h-2}{h-2} \binom{2h-3}{h-2}^{-1} 
     \Biggr) (-z)^{h-1} \\ 
     & \phantom{=\ } + 
     \sum_{i=2}^{h-2} \binom{x+n-h}{n} \binom{h-1}{n} \binom{2h-1}{n}^{-1} 
     \Biggl(
     \frac{2(2h-1)(h-n-1)(x+n+1-h)}{(2h-2-n)(2h-1-n)(x+1-h)} \\ 
     & \phantom{=+\sum\ } - 
     \frac{2n(1+2h(h-2)-x)}{(2h-3)(2h-1-n)(x+1-h)} \\ 
     & \phantom{=+\sum\ } + 
     \frac{n(n-1)(2h-1)(x+2-h)(x-1+h)}{(2h-3)(2h-2-n)(2h-1-n)(x+2-h)(x+1-h)} 
     \Biggr) (-z)^n \\ 
     & = 
     1+\binom{x+1-h}{1} \binom{h-1}{1} \binom{2h-1}{1}^{-1} (-z) + 
     \binom{x-1}{h-1} \binom{h-1}{h-1} \binom{2h-1}{h-1}^{-1} (-z)^{h-1} \\ 
     & \phantom{=1\ } + 
     \sum_{n=2}^{h-2} \binom{x+n-h}{n} \binom{h-1}{n} \binom{2h-1}{n}^{-1} (-z)^n. 
\end{align*} 
We also notice the particular identity of use in proving 
Theorem \ref{theorem_MainThmI} below that the coefficients of the powers of $z^n$ 
on the right-hand-sides of the stated formulas satisfy 
\begin{align} 
\label{eqn_PropPhxz_alt_stmt} 
 & \mathsmaller{(-1)^n \binom{x+n-h}{n} \binom{h-1}{n} \binom{2h-1}{n}^{-1}} \\ 
\notag 
     & \phantom{(-1)^n\ } = 
     \sum_{i=0}^{n} \binom{x+n-i}{n-i} \binom{x+h}{i} \frac{h!}{(h-i)!} 
     \frac{(2h-1-i)!}{(2h-1)!} (-1)^{n-i}, 
\end{align} 
which is proved by summing exactly with \emph{Mathematica}. 
\end{proof} 

\begin{prop}[Formulas for the Denominator Convergent Functions] 
\label{prop_DenomConvFn_Formula}
For all $h \geq 0$ and fixed indeterminates $x$, we have that the 
denominator convergent functions, $\ConvQ_{1,h}(x, z)$, 
satisfy the following formula: 
\begin{align*} 
\ConvQ_h(x, z) & = \sum_{i=0}^{h} \binom{x+h}{i} \cdot \frac{h!}{(h-i)!} \cdot 
     \frac{(2h-1-i)!}{(2h-1)!} \cdot z^i. 
\end{align*} 
\end{prop}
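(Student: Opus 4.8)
The plan is to prove the stated formula by induction on $h$, in direct parallel with the proof of Proposition~\ref{prop_NumConvFn_Formula} for the numerator functions. The base cases $h = 0, 1, 2$ follow immediately from the explicit polynomials tabulated in Table~\ref{table_ConvQ2hxz_SpCases}; for example, one checks directly that $\sum_{i=0}^{h} \binom{x+h}{i} \frac{h!}{(h-i)!} \frac{(2h-1-i)!}{(2h-1)!} z^i$ reproduces $1 + (x+1)z$ when $h = 1$ and $1 + \frac{2}{3}(x+2)z + \frac{1}{6}(x+1)(x+2)z^2$ when $h = 2$. Throughout, I write $q_{h,i} := \binom{x+h}{i} \frac{h!}{(h-i)!} \frac{(2h-1-i)!}{(2h-1)!}$ for the claimed coefficient of $z^i$, so that the target reads $\ConvQ_{1,h}(x,z) = \sum_{i=0}^{h} q_{h,i} z^i$, with the convention that $q_{k,j} = 0$ whenever $j < 0$ or $j > k$.

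For the inductive step with $h \geq 3$, I would substitute the induction hypotheses for $\ConvQ_{1,h-1}(x,z)$ and $\ConvQ_{1,h-2}(x,z)$ into the second-order recurrence of \eqref{eqn_ConvFns_BinomCoeff_xpnn} --- the same recurrence in $h$ that governs the numerator functions by property~(B), here with the initial-condition Iverson terms inactive since $h \geq 3$ --- and extract the coefficient of $z^i$ on each side. Using $1 - c_{x,h}^{(1)} z = 1 + \frac{(1 + 2h(h-2) - x)}{(2h-1)(2h-3)} z$ and $-\ab_{x,h}^{(1)} z^2 = \frac{(x+2-h)(x+h-1)}{4(2h-3)^2} z^2$, the entire claim reduces to verifying the single three-term coefficient recurrence
\[
q_{h,i} = q_{h-1,i} - c_{x,h}^{(1)}\, q_{h-1,i-1} - \ab_{x,h}^{(1)}\, q_{h-2,i-2}, \qquad 0 \leq i \leq h.
\]

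The bulk of the work is this last identity, which I expect to be the main obstacle. After expanding the $q$'s, the three contributing terms carry four distinct factorial normalizations of the shape $(2h - a - i)!/(2h - b)!$ together with the shifted upper binomials $\binom{x+h}{i}$, $\binom{x+h-1}{i}$, $\binom{x+h-1}{i-1}$, and $\binom{x+h-2}{i-2}$. The strategy is to clear everything to a common multiple of $q_{h,i}$, reducing the statement to a rational identity in $x$, $h$, and $i$ --- precisely the kind of finite binomial/factorial identity that was closed by symbolic summation for \eqref{eqn_PropPhxz_alt_stmt} in the numerator proof, and which can here be dispatched either by Pascal-type relations in the upper index $x$ combined with ratio simplifications of the factorial factors, or by direct verification in \emph{Mathematica}. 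The remaining care lies in the boundary coefficients: the cases $i = 0, 1$, where the $\ab$-term vanishes and the recurrence collapses to $q_{h,0} = q_{h-1,0} = 1$ and $q_{h,1} = q_{h-1,1} - c_{x,h}^{(1)}$; and the top coefficient $i = h$, where $q_{h-1,h} = 0$ forces the separate check $q_{h,h} = -c_{x,h}^{(1)} q_{h-1,h-1} - \ab_{x,h}^{(1)} q_{h-2,h-2}$. Verifying the interior range $2 \leq i \leq h-2$ of the identity above, and reconciling the differing factorial normalizations and shifted binomial arguments there, is the algebraically heaviest part of the argument.
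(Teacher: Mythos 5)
Your proposal is correct and follows essentially the same route as the paper's own proof: induction on $h$ with base cases read off Table~\ref{table_ConvQ2hxz_SpCases}, substitution of the inductive hypotheses for $\ConvQ_{1,h-1}$ and $\ConvQ_{1,h-2}$ into the second-order recurrence, and coefficient-wise verification of the resulting three-term rational identity, with the boundary indices $i = 0, 1$ and $i = h$ treated separately --- exactly the structure of the paper's argument (which likewise closes the interior identity by direct algebraic/symbolic simplification). You also correctly read the $\ab$-term of \eqref{eqn_ConvFns_BinomCoeff_xpnn} as acting on the $(h-2)$-nd convergent, matching what the paper actually uses in its expansion despite the typo in the displayed recurrence.
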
 
\begin{proof} 
The proof again follows from \eqref{eqn_ConvFns_BinomCoeff_xpnn} of 
Definition \ref{def_ComponentSeqs_ConvFns_BinomCoeff_xpnn} by induction on $h$. 
The claimed formula holds when $h = 0, 1, 2$ by the computations given in 
Table \ref{table_ConvQ2hxz_SpCases}. 
Next, we suppose that $h \geq 3$ and that the formula stated in the 
proposition for $\ConvQ_{1,k}(x, z)$ is correct for all $k < h$. 
In particular, the stated formulas hold when $k = h-1, h-2$. 
Then by expanding \eqref{eqn_ConvFns_BinomCoeff_xpnn} using our 
inductive hypothesis, we have that 
\begin{align*} 
\ConvQ_{1,h}(x, z) & = 
     \sum_{i=0}^{h-1} \binom{x+h-1}{i} \frac{(h-1)!}{(h-1-i)!} 
     \frac{(2h-3-i)!}{(2h-3)!} z^i \\ 
     & \phantom{=\ } + 
     \frac{(1+2h(h-2)-x)}{(2h-1)(2h-3)} \sum_{i=1}^{h} 
     \binom{x+h-1}{i-1} \frac{(h-1)!}{(h-i)!} \frac{(2h-2-i)!}{(2h-3)!} z^i \\ 
     & \phantom{=\ } + 
     \frac{(x+1-h)(x+h-1)}{4 (2h-3)^2} \sum_{i=2}^{h} 
     \binom{x+h-2}{i-2} \frac{(h-2)!}{(h-i)!} \frac{(2h-3-i)!}{(2h-5)!} z^i \\ 
     & = 
     1 + \binom{x+h}{1} \frac{h z}{(2h-1)} + 
     \binom{x+h}{h} \frac{h! (h-1)! z^h}{(2h-1)!} \\ 
     & \phantom{=1\ } + 
     \sum_{i=2}^{h-1} \binom{x+h}{i} \frac{h!}{(h-i)!} \frac{(2h-1-i)!}{(2h-1)!} z^i 
     \Biggl( 
     \frac{2(h-1)(2h-1)(h-i)(x+h-i)}{h (2h-2-i)(2h-1-i)(h+x)} \\ 
     & \phantom{=1+\sum\ } + 
     \frac{2(h-1) i (1 + 2h(h-2)-x)}{h (2h-3)(2h-1-i)(h+x)} \\ 
     & \phantom{=1+\sum\ } + 
     \frac{(h-2)(2h-1)(i(i-1)(x+2-h)}{h (2h-3) (2h-2-i)(2h-1-i)(h+x)} 
     \Biggr) \\ 
     & = 
     \sum_{i=0}^{h} \binom{x+h}{i} \frac{h!}{(h-i)!} \frac{(2h-1-i)!}{(2h-1)!} z^i. 
     \qedhere 
\end{align*} 
\end{proof} 

The expansions of the J-fractions, and more generally, for any continued fraction 
whose convergents are defined by the ratio of terms defined recursively as in 
\eqref{eqn_ConvFns_BinomCoeff_xpnn}, provide additional 
recurrence relations and exact finite sums for the $h^{th}$ convergent 
functions, $\Conv_{1,h}(x, z)$, given by \citep[\S 1.12]{NISTHB} 
\begin{align*} 
\Conv_{1,h}(x, z) & = \Conv_{1,h-1}(x, z) + 
     \frac{(-1)^{h-1} \ab_{x,2} \ab_{x,3} \cdots \ab_{x,h} z^{2h-2}}{ 
     \ConvQ_{1,h}(x, z) \ConvQ_{1,h-1}(x, z)} \\ 
     & = 
     \frac{1}{1+(x+1)z} + \sum_{i=2}^{h} 
     \frac{(-1)^{i-1}\binom{x+i-1}{i-1} \binom{x}{i-1} 
     \binom{2i-3}{i-2}^{-2} z^{2i-2}}{ 
     2 \cdot \ConvQ_{1,i}(x, z) \ConvQ_{1,i-1}(x, z)}. 
\end{align*} 

\begin{theorem}[Main Theorem I] 
\label{theorem_MainThmI} 
For integers $h \geq 2$, we have that the $h^{th}$ convergent functions, 
$\Conv_{1,h}(x, z)$, exactly generate the binomial coefficients $\binom{x+n}{n}$ 
for all $0 \leq n \leq h$ as $[z^n] \Conv_{1,h}(x, -z) = \binom{x+n}{n}$. 
\end{theorem}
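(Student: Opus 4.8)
The plan is to establish the coefficient identity
\[
[z^n] \Conv_{1,h}(x, -z) = \binom{x+n}{n}, \qquad 0 \le n \le h,
\]
by working directly with the convergent function $\Conv_{1,h} = \ConvP_{1,h} / \ConvQ_{1,h}$, whose numerator and denominator are now given in closed form by Proposition \ref{prop_NumConvFn_Formula} and Proposition \ref{prop_DenomConvFn_Formula}. Since property (A) from the introduction guarantees that the $h^{th}$ convergent agrees with the infinite J-fraction through order $z^{2h-1}$, it suffices to identify those coefficients, and the range $0 \le n \le h$ is comfortably inside that window. The cleanest route is to show that the generating-function identity
\[
\ConvP_{1,h}(x, -z) = \ConvQ_{1,h}(x, -z) \cdot \sum_{m \ge 0} \binom{x+m}{m} z^m
\]
holds as formal power series through order $z^h$; equivalently, that the Cauchy product of $\ConvQ_{1,h}(x,-z)$ with the binomial series $\sum_m \binom{x+m}{m} z^m = (1-z)^{-(x+1)}$ reproduces exactly the numerator coefficients from Proposition \ref{prop_NumConvFn_Formula} in degrees $0$ through $h-1$.

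First I would extract the coefficient of $z^n$ on the right-hand side. Using Proposition \ref{prop_DenomConvFn_Formula} with $z \mapsto -z$, the coefficient of $z^i$ in $\ConvQ_{1,h}(x,-z)$ is $(-1)^i \binom{x+h}{i} \frac{h!}{(h-i)!} \frac{(2h-1-i)!}{(2h-1)!}$, so the Cauchy product gives
\[
[z^n]\Bigl(\ConvQ_{1,h}(x,-z)\, (1-z)^{-(x+1)}\Bigr) =
\sum_{i=0}^{n} (-1)^i \binom{x+h}{i} \frac{h!}{(h-i)!} \frac{(2h-1-i)!}{(2h-1)!} \binom{x+n-i}{n-i}.
\]
The goal is to prove this equals $(-1)^n \binom{x+n-h}{n}\binom{h-1}{n}\binom{2h-1}{n}^{-1}$, which is precisely $[z^n]\ConvP_{1,h}(x,-z)$ by Proposition \ref{prop_NumConvFn_Formula}. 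But this is exactly the identity \eqref{eqn_PropPhxz_alt_stmt} recorded at the end of the proof of Proposition \ref{prop_NumConvFn_Formula} (after multiplying through by $(-1)^n$ and noting $\binom{x+n-i}{n-i} = \binom{x+n-i}{x}$), so that combinatorial sum is already available and I would simply cite it rather than re-derive it.

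Having matched the series coefficients for $0 \le n \le h-1$ through the numerator-denominator product, the next step is to promote this to the statement about $\Conv_{1,h}$ itself. Because $\ConvQ_{1,h}(x, 0) = 1$, the denominator is a unit in the ring of formal power series, so the identity $\ConvP_{1,h} = \ConvQ_{1,h} \cdot \sum_m \binom{x+m}{m}(-z)^{\,m}$ through order $z^h$ can be inverted to yield $[z^n]\Conv_{1,h}(x,-z) = \binom{x+n}{n}$ for all $0 \le n \le h$; I would verify the two endpoint cases $n=0$ and $n=h$ separately, the former being immediate and the latter following from the top-degree numerator coefficient $\binom{x-1}{h-1}\binom{2h-1}{h-1}^{-1}$ computed in Proposition \ref{prop_NumConvFn_Formula}. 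The main obstacle is the combinatorial sum \eqref{eqn_PropPhxz_alt_stmt}: if one were not permitted to cite it, one would need a Vandermonde- or Chu--Vandermonde-type argument (or creative telescoping / the WZ method) to collapse the alternating sum of products of binomial coefficients into the single closed form, and getting the $(2h-1-i)!/(2h-1)!$ normalization factors to cooperate is the delicate part. Since the statement is provided, the proof reduces to careful bookkeeping, and the only real care needed is confirming that the Cauchy product is truncation-consistent, i.e.\ that contributions from $i > n$ or from numerator degrees beyond $h-1$ do not interfere within the range $0 \le n \le h$.
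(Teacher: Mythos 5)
Your overall strategy is, despite the different packaging, essentially the paper's own argument: cross-multiplying by the denominator and matching coefficients against Proposition \ref{prop_NumConvFn_Formula} is precisely the finite difference equation \eqref{eqn_ProofOfThmI_ConvFn_FiniteDiffEqn} that the paper extracts from rationality, and both arguments ultimately rest on the summation identity \eqref{eqn_PropPhxz_alt_stmt}. Your reorganization (prove $\ConvP_{1,h}(x,-z) \equiv \ConvQ_{1,h}(x,-z)\,(1-z)^{-(x+1)} \pmod{z^{h+1}}$, then invert the unit $\ConvQ_{1,h}$) cleanly replaces the paper's induction on $n$, and that part of your write-up is correct up to one sign slip: by Proposition \ref{prop_NumConvFn_Formula},
\begin{align*}
[z^n]\,\ConvP_{1,h}(x,-z) &= \binom{x+n-h}{n}\binom{h-1}{n}\binom{2h-1}{n}^{-1},
\end{align*}
\emph{without} the factor $(-1)^n$ you attached (that factor belongs to $[z^n]\,\ConvP_{1,h}(x,z)$). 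Since \eqref{eqn_PropPhxz_alt_stmt} multiplied through by $(-1)^n$ yields exactly your Cauchy-product sum with the $(-1)^i$ signs, the coefficients do match for $0 \leq n \leq h-1$, so this is recoverable bookkeeping rather than a flaw in the plan.

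The genuine gap is the endpoint $n = h$. Your claim that this case follows ``from the top-degree numerator coefficient'' is not a proof and does not even parse: the top-degree coefficient of $\ConvP_{1,h}$ sits in degree $h-1$, whereas what must be shown is that the degree-$h$ coefficient of the product vanishes, i.e.
\begin{align*}
\sum_{i=0}^{h} (-1)^i \binom{x+h}{i}\,\frac{h!}{(h-i)!}\,\frac{(2h-1-i)!}{(2h-1)!}\,\binom{x+h-i}{h-i} \;=\; 0 \;=\; [z^h]\,\ConvP_{1,h}(x,-z).
\end{align*}
This is not covered by \eqref{eqn_PropPhxz_alt_stmt} as recorded in the paper, since that identity concerns only the coefficients of the numerator polynomial, i.e.\ the range $0 \leq n \leq h-1$; it is a separate identity, and the paper proves it by a distinct route (the ${}_3F_2$ evaluation in the proof of Theorem \ref{theorem_MainThmI}, which the footnote reduces to $\sum_{k=0}^{n}\binom{n}{k}^2 (-1)^k\, k!\,(2n-1-k)!/(2n-1)! = 0$). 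To close your argument you would need either to verify that \eqref{eqn_PropPhxz_alt_stmt} remains valid at $n = h$ (where its left-hand side vanishes because $\binom{h-1}{h} = 0$), or to import the paper's hypergeometric evaluation; as written, your proposal establishes the theorem only for $0 \leq n \leq h-1$. Finally, your appeal to property (A) is superfluous and circular as a justification: the theorem concerns $\Conv_{1,h}$ itself, not its agreement with the infinite J-fraction, so no comparison window is needed.
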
 
\begin{proof} 
Since $\Conv_{1,h}(x, z)$ is rational in $z$ for all $h \geq 2$, 
Proposition \ref{prop_NumConvFn_Formula} and 
Proposition \ref{prop_DenomConvFn_Formula} imply the following 
finite difference equations for the coefficients of the convergent functions, 
$\Conv_{1,h}(x, z)$, when $n \geq 0$ \citep[\S 2.3]{GFLECT}: 
\begin{align} 
\label{eqn_ProofOfThmI_ConvFn_FiniteDiffEqn} 
[z^n] \Conv_{1,h}(x, -z) & = -\sum_{i=1}^{\min(n, h)} 
     \binom{x+h}{i} [z^{n-i}] \Conv_{1,h}(x, z) \times 
     \frac{h!}{(h-i)!} \frac{(2h-1-i)!}{(2h-1)!} (-1)^i \\ 
\notag 
     & \phantom{=\ } + 
     [z^n] \ConvP_{1,h}(x, -z) \Iverson{0 \leq n < h}. 
\end{align} 
We must show that $[z^n] \Conv_{1,h}(x, -z) = \binom{x+n}{n}$ in the 
separate cases where $0 \leq n < h$ and when $n \equiv h$. 
For the first case, we use induction on $n$ to show our result. 
Since $[z^0] F(z) / G(z) = F(0) / G(0)$ for any functions, $F(z)$ and $G(z)$, 
with a power series expansion in $z$ about zero, we have by the two 
propositions above that $[z^0] \Conv_{1,h}(x, z) = \binom{x+0}{0} \equiv 1$ for all 
$h \geq 2$. 

Next, we suppose that for $h > n \geq 1$ and all $k < n$, 
$[z^k] \Conv_{1,h}(x, -z) = \binom{x+k}{k}$. 
Since $0 \leq n - i < n$ for all $i$ in the right-hand-side sum of 
\eqref{eqn_ProofOfThmI_ConvFn_FiniteDiffEqn}, we can apply the inductive 
hypothesis, combined with the observation in \eqref{eqn_PropPhxz_alt_stmt} from the 
proof of Proposition \ref{prop_NumConvFn_Formula}, to the 
recurrence relation in the previous equation to obtain that when $n < h$ we have 
\begin{align*} 
[z^n] \Conv_{1,h}(x, -z) & = -\sum_{i=1}^{n} 
     \binom{x+h}{i} \binom{x+n-i}{n-i} 
     \frac{h!}{(h-i)!} \frac{(2h-1-i)!}{(2h-1)!} (-1)^i \\ 
     & \phantom{=\ } + 
     \sum_{i=0}^{n} \binom{x+h}{i} \binom{x+n-i}{n-i} 
     \frac{h!}{(h-i)!} \frac{(2h-1-i)!}{(2h-1)!} (-1)^i \\ 
     & = 
     \binom{x+n}{n}. 
\end{align*} 
To prove the claim in the first special case of 
\eqref{eqn_ProofOfThmI_ConvFn_FiniteDiffEqn} 
where $[z^n] \ConvP_{1,h}(x, -z) \equiv 0$, \ie precisely when $n \equiv h$, 
we use an alternate approach to evaluating these sums by exactly summing with 
\emph{Mathematica} as 
\begin{align*} 
[z^n] \Conv_{1,h}(x, -z) & = 
     -\sum_{i=1}^{n} 
     \binom{x+h}{i} \binom{x+n-i}{n-i} 
     \frac{h!}{(h-i)!} \frac{(2h-1-i)!}{(2h-1)!} (-1)^i \\ 
     & = 
     \binom{x+n}{n} \left(1 - _3F_2(-h, -n, -(x+h); 1-2h, -(x+n); 1), 
     \right) 
\end{align*} 
where $_pF_q(a_1, \ldots, a_p; b_1, \ldots, b_q; z)$ denotes the 
\emph{generalized hypergeometric function} whose coefficients over powers of $z^k$ 
are given by the \emph{Pochhammer symbol} products, 
$\Pochhammer{a_1}{k} \cdots \Pochhammer{a_p}{k} / k! \cdot 
 \Pochhammer{b_1}{k} \cdots \Pochhammer{b_q}{k}$ \citep[\S 16]{NISTHB}. 
When $h \equiv n$, the terms contributed by the generalized hypergeometric 
function in the previous equation are zero-valued\footnote{ 
     \ie since $(-n)_{n+1+k} = 0$ for all $k \geq 0$ and where for all integers 
     $n \geq 1$ the next hypergeometric sum is evaluated exactly with 
     \emph{Mathematica} as 
     \begin{align*} 
     \sum_{k=0}^n \binom{n}{k}^2 \times \frac{(-1)^k k! \cdot (2n-1-k)!}{(2n-1)!} 
          = 0. 
     \end{align*} 
}. 
Therefore when $n \equiv h$, we have that $[z^n] \Conv_{1,h}(x, z) = \binom{x+n}{n}$. 
\end{proof} 

We can actually prove a stronger statement of Theorem \ref{theorem_MainThmI} which 
provides that $[z^n] \Conv_{1,h}(x, z) = \binom{x+n}{n}$ for all 
$0 \leq n < 2h$, though for the purposes of showing that our J-fraction 
expansions defined by Definition \ref{def_ComponentSeqs_ConvFns_BinomCoeff_xpnn} 
are correct, the proof of the theorem given above suffices to show the result. 
One consequence of the theorem is that we have the following finite sums 
exactly generating, $\binom{x+n}{n}$, for any $x$ and all $n \geq 0$, 
\ie in the case of \eqref{eqn_ProofOfThmI_ConvFn_FiniteDiffEqn} where $h \equiv n$: 
\begin{align*} 
\binom{x+n}{n} & = \sum_{i=1}^{n} \binom{x+n}{i} \binom{x+n-i}{n-i} 
     \binom{n}{i} \binom{2n-1}{i}^{-1} (-1)^{i+1} + 
     \Iverson{n = 0}. 
\end{align*} 

\begin{cor}[Congruences for $\binom{x+n}{n}$ Modulo Integers $h \geq 2$] 
\label{cor_Congruences_BinomCoeff_xpnn} 
For $h \geq 2$, let $\lambda_h(x) := \prod_{i=2}^{h} \ab_{x,i}$, or equivalently, let 
$\lambda_h(x) \equiv 
 \frac{(-1)^{h-1}}{2} \binom{x+h-1}{h-1} \binom{x}{h-1} / \binom{2h-3}{h-2}^2$. 
For all $h \geq 2$, $m \geq h$, $0 \leq x \leq h$, and $n \geq 0$, 
whenever $\lambda_m(x) \in \mathbb{Z}$ and $h \mid \lambda_m(x)$, we have the 
following congruences congruences for the binomial coefficients modulo $h$: 
\begin{align*} 
\binom{x+n}{n} & \equiv \sum_{i=1}^{n} \binom{x+m}{i} \binom{x+n-i}{n-i} \cdot 
     \frac{m!}{(m-i)!} \cdot \frac{(2m-1-i)!}{(2m-1)!} (-1)^{i+1} \\ 
     & \phantom{\equiv\ } + 
     \binom{x+n-m}{n} \binom{m-1}{n} \binom{2m-1}{n}^{-1} 
     \Iverson{0 \leq n < m} && \pmod{h}. 
\end{align*} 
\end{cor}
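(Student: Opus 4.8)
The plan is to recognize the corollary as the explicit arithmetic shadow of two facts already established: property (C) of the $h^{th}$ convergents, which transfers the coefficients of the infinite $J$-fraction onto those of a single fixed convergent modulo the $h^{th}$ modulus, together with the finite difference equation \eqref{eqn_ProofOfThmI_ConvFn_FiniteDiffEqn}, which renders those convergent coefficients as an explicit finite sum. Nothing new needs to be summed; the work is in aligning the modulus with $\lambda_m(x)$ and in justifying the reduction modulo $h$.

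First I would identify $\lambda_m(x)$ with the modulus $M_m$ that controls property (C). The telescoping identity stated just before Theorem \ref{theorem_MainThmI} writes the gap $\Conv_{1,i}(x,z)-\Conv_{1,i-1}(x,z)$ with numerator proportional to $\ab_{x,2}\ab_{x,3}\cdots\ab_{x,i}$ and lowest-order term $z^{2i-2}$, so the product $\lambda_m(x)=\prod_{i=2}^{m}\ab_{x,i}$ is precisely the quantity governing the $z$-adic distance between $\Conv_{1,m}(x,z)$ and the infinite fraction $J^{[\infty]}(x,z)=\lim_{m}\Conv_{1,m}(x,z)$; the index begins at $2$ rather than $1$ because $\ab_{x,1}=0$ in Definition \ref{def_ComponentSeqs_ConvFns_BinomCoeff_xpnn}. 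Reading the closed form of this product off the same identity recovers the stated evaluation of $\lambda_m(x)$ in terms of $\binom{x+m-1}{m-1}\binom{x}{m-1}/\binom{2m-3}{m-2}^2$.

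Next, under the hypotheses $\lambda_m(x)\in\mathbb{Z}$ and $h\mid\lambda_m(x)$ with $m\geq h$, property (C) \citep[Thm.\ 1]{FLAJOLET82}\citep[\S 5.7]{GFLECT} gives $[z^n]\Conv_{1,m}(x,-z)\equiv[z^n]J^{[\infty]}(x,-z)\pmod{h}$ for every $n\geq 0$, while Theorem \ref{theorem_MainThmI} (in its strengthened form valid for $0\leq n<2m$, hence in the limit $m\to\infty$) identifies $[z^n]J^{[\infty]}(x,-z)=\binom{x+n}{n}$. Combining these yields $\binom{x+n}{n}\equiv[z^n]\Conv_{1,m}(x,-z)\pmod{h}$, so it remains only to expand the convergent coefficient. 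For this I would apply \eqref{eqn_ProofOfThmI_ConvFn_FiniteDiffEqn} with $m$ in place of $h$: its inhomogeneous term is $[z^n]\ConvP_{1,m}(x,-z)=\binom{x+n-m}{n}\binom{m-1}{n}\binom{2m-1}{n}^{-1}$ by Proposition \ref{prop_NumConvFn_Formula}, active exactly when $0\leq n<m$, and its convolution term, after replacing each lower coefficient $[z^{n-i}]\Conv_{1,m}(x,-z)$ by the congruent value $\binom{x+n-i}{n-i}$ and writing $-(-1)^i=(-1)^{i+1}$, becomes the displayed sum $\sum_{i=1}^{n}\binom{x+m}{i}\binom{x+n-i}{n-i}\tfrac{m!}{(m-i)!}\tfrac{(2m-1-i)!}{(2m-1)!}(-1)^{i+1}$. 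Extending the summation from $\min(n,m)$ up to $n$ is harmless, since the falling-factorial factor $m!/(m-i)!$ vanishes for $i>m$.

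The hard part will not be the algebra but the integrality needed to make the reduction modulo $h$ legitimate. The convolution weights $\binom{x+m}{i}\tfrac{m!}{(m-i)!}\tfrac{(2m-1-i)!}{(2m-1)!}$ are \emph{a priori} only rational, so substituting the congruence $[z^{n-i}]\Conv_{1,m}(x,-z)\equiv\binom{x+n-i}{n-i}$ inside the sum requires these weights to be $h$-integral, i.e.\ to have denominators prime to $h$; the same demand underlies the validity of property (C) itself, whose proof inverts the denominator convergents $\ConvQ_{1,i}(x,z)$ as units in the relevant $h$-integral power series ring. This is exactly where the standing hypotheses $0\leq x\leq h$ and $\lambda_m(x)\in\mathbb{Z}$ enter: they guarantee both that the modulus is an honest integer to which property (C) applies and that every term of the finite sum reduces to a well-defined residue. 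I would therefore isolate and verify this $h$-integrality carefully, checking that no prime factor of $h\mid\lambda_m(x)$ can collide with a surviving factor of $(2m-1)!$ in a denominator, before concluding the congruence.
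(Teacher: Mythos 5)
Your proposal is correct and takes essentially the same route as the paper: the paper's entire proof is the remark that the corollary follows immediately from the finite difference equation \eqref{eqn_ProofOfThmI_ConvFn_FiniteDiffEqn} and the modulus-divisibility property (C) of Section \ref{subSection_Intro_J-fraction_exps}, which is precisely the combination you assemble (identify $\lambda_m(x)$ with the $m^{th}$ modulus, transfer $\binom{x+n}{n} \equiv [z^n]\Conv_{1,m}(x,-z) \pmod{h}$ via property (C), then expand that coefficient with the difference equation and Proposition \ref{prop_NumConvFn_Formula}). Your closing concern about the $h$-integrality of the rational weights $\binom{x+m}{i}\frac{m!}{(m-i)!}\frac{(2m-1-i)!}{(2m-1)!}$ is a genuine subtlety that the paper's ``immediate corollary'' phrasing silently glosses over, so flagging it makes your write-up, if anything, more careful than the published proof.
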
 
\begin{proof} 
The result is an immediate corollary of 
\eqref{eqn_ProofOfThmI_ConvFn_FiniteDiffEqn} from the proof of 
Theorem \ref{theorem_MainThmI} and the 
J-fraction coefficient congruence properties cited in property (C) of 
Section \ref{subSection_Intro_J-fraction_exps} in the introduction 
\citep{FLAJOLET82} \citep[\cf \S 5.7]{GFLECT}. 
\end{proof} 

\begin{remark}[Exact Congruences for the Binomial Coefficients] 
We conjecture that in fact for all $h \geq 2$, $h > n \geq 0$, and $x < h$, 
which typically corresponds to the cases of the binomial coefficients that 
we actually wish to evaluate modulo $h$, that 
\begin{align*} 
\binom{x+n}{n} & \equiv \sum_{i=1}^{h} \binom{x+h}{i} \binom{x+n-i}{n-i} \cdot 
     \frac{h!}{(h-i)!} \cdot \frac{(2h-1-i)!}{(2h-1)!} (-1)^{i+1} && 
     \pmod{h}. 
\end{align*} 
Using this result, we can expand these special case congruences for 
$\binom{x+n}{n}$ modulo $2$, $3$, $4$, and $5$ as follows: 
\begin{align*} 
\binom{x+n}{n} & \equiv \frac{2(x+2)}{3} \binom{x+n-1}{n-1} - 
     \frac{(x+1)(x+2)}{6} \binom{x+n-2}{n-2} && \pmod{2} \\ 
\binom{x+n}{n} & \equiv \frac{3(x+3)}{5} \binom{x+n-1}{n-1} - 
     \frac{3(x+2)(x+3)}{20} \binom{x+n-2}{n-2} \\ 
     & \phantom{\equiv\ } + 
     \frac{(x+1)(x+2)(x+3)}{60} \binom{x+n-3}{n-3} && \pmod{3} \\ 
\binom{x+n}{n} & \equiv \frac{4(x+4)}{7} \binom{x+n-1}{n-1} - 
     \frac{(x+3)(x+4)}{7} \binom{x+n-2}{n-2} \\ 
     & \phantom{\equiv\ } + 
     \frac{2(x+2)(x+3)(x+4)}{105} \binom{x+n-3}{n-3} \\ 
     & \phantom{\equiv\ } - 
     \frac{(x+1)(x+2)(x+3)(x+4)}{840} \binom{x+n-4}{n-4} && \pmod{4} \\ 
\binom{x+n}{n} & \equiv \frac{5(x+5)}{9} \binom{x+n-1}{n-1} - 
     \frac{5(x+4)(x+5)}{56} \binom{x+n-2}{n-2} \\ 
     & \phantom{\equiv\ } + 
     \frac{5(x+3)(x+4)(x+5)}{252} \binom{x+n-3}{n-3} \\ 
     & \phantom{\equiv\ } - 
     \frac{5(x+2)(x+3)(x+4)(x+5)}{3024} \binom{x+n-4}{n-4} \\ 
     & \phantom{\equiv\ } + 
     \frac{(x+1)(x+2)(x+3)(x+4)(x+5)}{15120} \binom{x+n-5}{n-5} && \pmod{5}. 
\end{align*} 
\end{remark} 

\begin{cor}[Reduction of Order of the Binomial Coefficients] 
\label{cor_BinCoeffV1_Addition-ReductionFormulas} 
For integers $r, p$ such that $p \geq r \geq 0$, let the sequences, 
$k_{r,p}(x)$, be defined as 
\begin{align*} 
k_{r,p}(x) & = (-1)^{p-r} \binom{x+p}{p-r} \binom{2r}{r} \binom{p+r}{r}^{-1}. 
\end{align*} 
For all integers $p, q \geq 0$, we have an addition theorem for the 
binomial coefficients given by 
\begin{align*} 
(-1)^{p+q} \binom{x+p+q}{p+q} & = k_{0,p}(x) k_{0,q}(x) + 
     \sum_{i=1}^{\max(p, q)} \lambda_{i+1}(x) k_{i,p}(x) k_{i, q}(x), 
\end{align*} 
where $\lambda_h(x)$ is defined as in 
Corollary \ref{cor_Congruences_BinomCoeff_xpnn}. 
\end{cor}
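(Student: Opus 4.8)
The plan is to recognize this addition theorem as the Parseval-type expansion identity attached to the orthogonal polynomial sequence underlying the J-fraction of Definition \ref{def_ComponentSeqs_ConvFns_BinomCoeff_xpnn}. First I would set up the moment functional. By (the strengthened form of) Theorem \ref{theorem_MainThmI}, the infinite J-fraction satisfies $[z^n] J^{[\infty]}(z) = (-1)^n \binom{x+n}{n} =: \mu_n$; I would treat $x$ as an indeterminate throughout, so that every claimed identity becomes an identity of polynomials in $\mathbb{Q}(x)$ (the relevant Hankel determinants are nonzero rational functions, so the orthogonal polynomial sequence exists and is unique over $\mathbb{Q}(x)$). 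Let $\mathcal{L}$ be the linear functional on polynomials in a formal variable $w$ determined by $\mathcal{L}[w^n] = \mu_n$, and let $\pi_h(w) := w^h \ConvQ_{1,h}(x, 1/w)$ be the reversals of the denominator convergents. By Proposition \ref{prop_DenomConvFn_Formula} each $\pi_h$ is monic of degree $h$ in $w$, and reversing the recurrence of \eqref{eqn_ConvFns_BinomCoeff_xpnn} gives $\pi_h = (w - c_{x,h}^{(1)})\pi_{h-1} - \ab_{x,h}^{(1)} \pi_{h-2}$. By the standard dictionary between J-fractions and orthogonal polynomial sequences \citep{WALL-CFRACS,FLAJOLET80B}, the $\pi_h$ are orthogonal for $\mathcal{L}$ with $\mathcal{L}[\pi_h \pi_k] = \delta_{h,k}\, \lambda_{h+1}(x)$, where $\lambda_{h+1}(x) = \prod_{j=2}^{h+1} \ab_{x,j}^{(1)}$ is exactly the modulus appearing in Corollary \ref{cor_Congruences_BinomCoeff_xpnn}.

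Second, I would record the expansion of $w^p$ in this graded basis: writing $w^p = \sum_{r=0}^{p} a_{r,p}(x)\, \pi_r(w)$, orthogonality forces $a_{r,p}(x) = \mathcal{L}[w^p \pi_r] / \lambda_{r+1}(x)$. The addition theorem is then immediate from bilinearity and orthogonality, since $\mu_{p+q} = \mathcal{L}[w^p \cdot w^q] = \sum_{r,s} a_{r,p} a_{s,q} \mathcal{L}[\pi_r \pi_s] = \sum_{r=0}^{\min(p,q)} a_{r,p}(x)\, a_{r,q}(x)\, \lambda_{r+1}(x)$. Pulling out the $r = 0$ term, where $\lambda_1(x)$ is the empty product $1$, and extending the upper limit to $\max(p,q)$ (the extra summands vanish because $a_{r,p} = 0$ for $r > p$) reproduces the right-hand side of the corollary verbatim, provided I identify the connection coefficients $a_{r,p}(x)$ with $k_{r,p}(x)$.

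Third, and this is the crux, I would verify the closed form $a_{r,p}(x) = k_{r,p}(x) = (-1)^{p-r}\binom{x+p}{p-r}\binom{2r}{r}\binom{p+r}{r}^{-1}$. Substituting the explicit coefficients of $\pi_r$ from Proposition \ref{prop_DenomConvFn_Formula} together with the moment values $\mu_{p+r-i} = (-1)^{p+r-i}\binom{x+p+r-i}{p+r-i}$ collapses $\mathcal{L}[w^p \pi_r]/\lambda_{r+1}(x)$ into a single terminating hypergeometric sum over $i$, which I would evaluate in closed form by an exact symbolic summation of the same kind used throughout this article. The two boundary cases already agree and give confidence in the evaluation: $a_{0,p} = \mu_p = (-1)^p\binom{x+p}{p} = k_{0,p}$, and $a_{p,p} = 1 = k_{p,p}$ by monicity of $\pi_p$.

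The main obstacle is precisely this third step: proving the polynomial identity $\mathcal{L}[w^p \pi_r] = k_{r,p}(x)\, \lambda_{r+1}(x)$ for all $0 \leq r \leq p$, i.e.\ evaluating the inner hypergeometric sum in closed form. Everything else is structural and follows from the orthogonal-polynomial interpretation of the J-fraction together with the convergent-function formulas already established, so the genuine work is confined to this one closed-form summation identifying the connection coefficients with the stated product of binomial coefficients.
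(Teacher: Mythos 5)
Your proposal is correct, and at the structural level it sits inside the same Flajolet--Wall framework that the paper's proof invokes: the matrix equation \eqref{eqn_krpx_matrix_eqn} is precisely the statement that the connection coefficients expressing $w^p$ in the monic orthogonal basis $\pi_r(w) = w^r \ConvQ_{1,r}(x, 1/w)$ obey the multiplication-by-$w$ recurrence, and the bilinear addition formula you derive from orthogonality is the identity the paper simply cites from those references rather than re-deriving. The genuine difference is in how the crux --- identifying the connection coefficients with the closed form $k_{r,p}(x)$ --- gets discharged. The paper never evaluates $\mathcal{L}[w^p \pi_r]$: it extracts from \eqref{eqn_krpx_matrix_eqn} a three-term recurrence in $r$ and verifies by explicit induction (displayed rational-function algebra) that $k_{r,p}(x)$ satisfies it. You instead pin the coefficients down by the inner-product formula $a_{r,p} = \mathcal{L}[w^p \pi_r] / \lambda_{r+1}(x)$, which, after inserting Proposition \ref{prop_DenomConvFn_Formula} and the moments $\mu_n = (-1)^n \binom{x+n}{n}$, compresses the entire corollary into the single terminating identity
\begin{align*}
\sum_{i=0}^{r} \binom{x+r}{i}\binom{r}{i}\binom{2r-1}{i}^{-1}
     (-1)^{p+r-i}\binom{x+p+r-i}{p+r-i} & = k_{r,p}(x)\, \lambda_{r+1}(x),
\end{align*}
which you leave to exact symbolic summation after checking the boundary cases $r = 0$ and $r = p$. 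That identity is in fact true (for $r = 1$ both sides reduce to $(-1)^p \binom{x+p}{p} \tfrac{xp}{p+1}$), and deferring a terminating hypergeometric evaluation to \emph{Mathematica}-style certification matches the evidentiary standard the paper itself applies elsewhere; just be aware that in your formulation this one summation \emph{is} the whole theorem, so until it is certified your write-up is a complete plan rather than a complete proof. What your route buys is transparency: the addition theorem becomes a two-line Parseval computation, $\lambda_{r+1}(x)$ is visibly the squared norm $\mathcal{L}[\pi_r^2]$, and Proposition \ref{prop_DenomConvFn_Formula} is reused directly. What the paper's route buys is that its key verification is elementary algebra along the recurrence, requiring no new closed-form summation at all.
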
 
\begin{proof} 
We can prove the result using the matrix method from the references 
\citep[\S 1, p.\ 133]{FLAJOLET80B} \citep[\S 11]{WALL-CFRACS}. 
In particular, for integers $p \geq r \geq 0$, let the functions, 
$\widetilde{k}_{r,p}(x)$, denote the solutions to the matrix equation 
\begin{align} 
\label{eqn_krpx_matrix_eqn} 
\scriptsize\begin{bmatrix} 
\widetilde{k}_{0,1}(x) & \widetilde{k}_{1,1}(x) & 0 & 0 & \cdots \\ 
\widetilde{k}_{0,2}(x) & \widetilde{k}_{1,2}(x) & \widetilde{k}_{2,2}(x) & 0 
     & \cdots \\ 
\widetilde{k}_{0,3}(x) & \widetilde{k}_{1,3}(x) & \widetilde{k}_{2,3}(x) & 
     \widetilde{k}_{3,3}(x) & \cdots \\ 
 & \cdots 
\end{bmatrix} & = 
\scriptsize\begin{bmatrix} 
\widetilde{k}_{0,0}(x) & 0 & 0 & 0 & \cdots \\ 
\widetilde{k}_{0,1}(x) & \widetilde{k}_{1,1}(x) & 0 & 0 & \cdots \\ 
\widetilde{k}_{0,1}(x) & \widetilde{k}_{1,1}(x) & \widetilde{k}_{2,2}(x) & 0 & 
     \cdots \\ 
 & \cdots 
\end{bmatrix} \cdot 
\begin{bmatrix} 
c_{x,1} & 1 & 0 & 0 & \cdots \\ 
\ab_{x,2} & c_{x,2} & 1 & 0 & \cdots \\ 
0 & \ab_{x,3} & c_{x,3} & 1 & \cdots \\ 
  & \cdots 
\end{bmatrix}, 
\end{align} 
where we define $\widetilde{k}_{0,p}(x) := \binom{x+p}{p}$ for all $p \geq 0$. 
We claim that for all integers $p, r \geq 0$ with $p \geq r \geq 0$, the two 
functions, $k_{r,p}(x)$ and $\widetilde{k}_{r,p}(x)$, are equal. 
To prove the claim, we notice that for fixed $p$, equation 
\eqref{eqn_krpx_matrix_eqn} implies recurrence relations over $r$ given by 
\begin{align*} 
\widetilde{k}_{1,p}(x) & = \frac{1}{\ab_{x,2}}\left( 
     \widetilde{k}_{0,p+1}(x) - c_{x,1} \cdot \widetilde{k}_{0,p}(x) 
     \right),\ p \geq 1 \\ 
\widetilde{k}_{r+1,p}(x) & = \frac{1}{\ab_{x,r+2}}\left( 
     \widetilde{k}_{r,p+1}(x) - \widetilde{k}_{r-1,p}(x) - 
     c_{x,r+1} \cdot \widetilde{k}_{r,p}(x)
     \right),\ p > r \geq 1. 
\end{align*} 
The first recurrence relation provides that for $p \geq 1$ 
\begin{align*} 
\widetilde{k}_{1,p} & = -\frac{2}{x(x+1)} \left( 
     (-1)^{p+1} \binom{x+p+1}{p+1} + (-1)^{p} (x+1) \binom{x+p}{p} 
     \right) \\ 
     & = 
     \frac{2 (-1)^{p-1} (p+x)!}{(p+1) (x+1)! (p-1)!} \left( 
     \frac{(x+1)(p+1)}{p x} - \frac{(x+p+1)}{p x} 
     \right) \\ 
     & = 
     \frac{2 (-1)^{p-1}}{(p+1)} \binom{x+p}{p-1}, 
\end{align*} 
which is the same as the formula for $k_{1,p}(x)$ stated above. 
We can then use the second recurrence relation to complete the proof of the 
claim by induction on $r$. 
When $r = 0, 1$, we have that the two formulas for $k_{r,p}(x)$ and 
$\widetilde{k}_{r,p}(x)$ coincide. We suppose that the claim is true for some 
$r \geq 1$, which by the previous recurrence relation in turn implies that 
\begin{align*} 
\widetilde{k}_{r+1,p}(x) & = 
     \frac{-4 \cdot (2r+1)^2}{(x-r)(x+r+1)} \Biggl( 
     (-1)^{p-1-r} \binom{x+p+1}{p+1-r} \binom{2p+2}{p+1} \binom{r+p+1}{p+1}^{-1} \\ 
     & \phantom{= \frac{-4 \cdot (2r+1)^2}{(x-r)(x+r+1)} \Biggl(\ } - 
     (-1)^{r-1-p} \binom{x+p}{p+1-r} \binom{2r-2}{r-1} 
     \binom{p+r-1}{r-1}^{-1} \\ 
     & \phantom{= \frac{-4 \cdot (2r+1)^2}{(x-r)(x+r+1)} \Biggl(\ } + 
     \frac{(-1)^{p-1-r} (1+2(r-1)(r+1)-x)}{(2r+1)(2r-1)} 
     \binom{x+p}{p-r} \binom{2p}{p} \binom{r+p}{p}^{-1} 
     \Biggr) \\ 
     & = 
     (-1)^{p-r-1} \binom{x+p}{p-1-r} \binom{2r+2}{r+1} \binom{p+r+1}{r+1}^{-1} \times 
     \frac{4 (2r+1)^2}{(r-x)(x+r+1)} \times \Biggl( \\ 
     & \phantom{=(-1)^{p-r-1}\ } 
     \frac{(p+1)(p+1+x)(r+1+x)}{2 (p-r)(p+1-r)(2r-1)(2r+1)} - 
     \frac{(p+r)(p+1+r)(r+x)(r+1+x)}{4 (p-r)(p+1-r)(2r-1)(2r+1)} \\ 
     & \phantom{=(-1)^{p-r-1}\ } - 
     \frac{(1+2(r-1)(r+1)-x)(p+1+r)(r+1+x)}{2 (p-r)(2r-1) (2r+1)^2} 
     \Biggr) \\ 
     & = 
     (-1)^{p-r-1} \binom{x+p}{p-1-r} \binom{2r+2}{r+1} \binom{p+r+1}{r+1}^{-1}. 
\end{align*} 
Since the two formulas are equivalent, we obtain the next form of the 
\emph{addition formula}, or alternately, a formula providing a 
``\emph{reduction}'' of the order of the upper and lower indices to the 
binomial coefficients, 
$\binom{x+n}{n}$, given by the expansion in the references in the 
following forms for all integers $p, q \geq 0$: 
\begin{align*} 
(-1)^{p+q} \binom{x+p+q}{p+q} & = 
     k_{0,p} k_{0,q} + \ab_{x,2} \cdot k_{1,p} k_{1,q} + 
     \ab_{x,2} \ab_{x,3} \cdot k_{2,p} k_{2,q} + \cdots \\ 
     & = 
     (-1)^{p+q} \binom{x+p}{p} \binom{x+q}{q} \\ 
     & \phantom{=\ } + 
     \sum_{i=1}^{\max(p, q)} 
     (-1)^{p+q} \lambda_{i+1}(x) \binom{x+p}{p-i} \binom{x+q}{q-i} \binom{2i}{i}^2 
     \binom{p+i}{i}^{-1} \binom{q+i}{i}^{-1}. 
     \qedhere 
\end{align*} 
\end{proof} 

We notice that this result provides \emph{exact} finite sum expansions of the 
binomial coefficients, $\binom{x+n}{n}$, for any $x$ and $n \geq 0$ which hold 
modulo any integers $h \geq 2$ (prime or composite), and compare the 
reduction in the upper and lower coefficient indices to the 
congruence result provided by Lucas' theorem and its related variants 
stated in the introduction. 

\section{J-Fraction Expansions for the Binomial Coefficients, $\binom{x}{n}$} 
\label{Section_BinomCoeffs_xn}

We can construct (and formally prove) similar convergent-based J-fraction 
constructions enumerating the binomial coefficients, $\binom{x}{n}$, for an 
indeterminate $x$ and $n \geq 0$. Since the proofs for the 
propositions, theorem, and corollaries in this section 
are almost identical to those given in the 
case of the binomial coefficient variants, $\binom{x+n}{n}$, in 
Section \ref{Section_BinomCoeffs_xpnn}, we omit the 
proofs of the next results stated below. 
We begin with the definition of the component sequences and convergent functions 
corresponding to the J-fractions generating the binomial coefficients, 
$\binom{x}{n}$, in this case. 

\begin{definition}[Component Sequences and Convergent Functions for $\binom{x}{n}$] 
\label{def_CompSeqs_ConvFns_BinCoeff_xn_v2} 
For a fixed non-zero indeterminate $x$ and $i \geq 1$, we define the 
component sequences, $c_{x,i}^{(2)}$ and $\ab_{x,i}^{(2)}$, as follows: 
\begin{align*} 
c_{x,i}^{(2)} & := -\frac{1}{(2i-1)(2i-3)}\left(x+2 (i-1)^2\right) \\ 
\ab_{x,i}^{(2)} & := 
     \begin{cases} 
     -\frac{1}{4 (2i-3)^2} (x-i+2)(x+i-1) & \text{ if } i \geq 3 \\ 
     -\frac{1}{2}x(x+1) & \text{ if } i = 2 \\ 
     0 & \text{ otherwise. }
     \end{cases} 
\end{align*} 
For $h \geq 0$, we define the $h^{th}$ convergent functions, 
$\Conv_{2,h}(x, z) := \ConvP_{2,h}(x, z) / \ConvQ_{2,h}(x, z)$, 
recursively through the component functions in the forms of 
\begin{align} 
\label{eqn_ConvFns_BinomCoeff_xn} 
\ConvP_{2,h}(x, z) & = (1-c_{x,h}^{(2)}z) \ConvP_{2,h-1}(x, z) - 
     \ab_{x,h}^{(2)} z^2 \ConvP_{2,h-1}(x, z) + \Iverson{h = 1} \\ 
\notag 
\ConvQ_{2,h}(x, z) & = (1-c_{x,h}^{(2)}z) \ConvQ_{2,h-1}(x, z) - 
     \ab_{x,h}^{(2)} z^2 \ConvQ_{2,h-1}(x, z) + 
     (1-c_{x,1}^{(2)}z) \Iverson{h = 1} + \Iverson{h = 0}. 
\end{align} 
Listings of the first several cases of the numerator and denominator 
convergent functions, $\ConvP_{2,h}(x, z)$ and $\ConvQ_{2,h}(x, z)$, 
are given in Table \ref{table_ConvP2v2hxz_SpCases} and 
Table \ref{table_ConvQ2v2hxz_SpCases}, respectively. 
\end{definition} 

\begin{table}[ht] 
\begin{framed} 
\centering\small

\begin{tabular}{||c|l||} \hline\hline 
$h$ & $\ConvP_{2,h}(x, z)$ \\ \hline
0 & $0$ \\ 
1 & $1$ \\
2 & $1 + \frac{x+2}{3}z$ \\
3 & $1+\frac{2}{5} (x+3) z+\frac{1}{20} (x+2) (x+3) z^2$ \\
4 & $1+\frac{3}{7} (x+4) z+\frac{1}{14} (x+3) (x+4) z^2+ 
     \frac{1}{210} (x+2) (x+3) (x+4) z^3$ \\
5 & $1+\frac{4}{9} (x+5) z+\frac{1}{12} (x+4) (x+5) z^2+ 
     \frac{1}{126} (x+3) (x+4) (x+5) z^3+ 
     \frac{(x+2) (x+3) (x+4) (x+5)}{3024} z^4$ \\
6 & $1+\frac{5}{11} (x+6) z+\frac{1}{11} (x+5) (x+6) z^2+ 
     \frac{1}{99} (x+4) (x+5) (x+6) z^3+ 
     \frac{(x+3) (x+4) (x+5) (x+6)}{1584} z^4$ \\ 
  & $\phantom{1} + 
     \frac{(x+2) (x+3) (x+4) (x+5) (x+6)}{55440} z^5$ \\
\hline\hline 
\end{tabular} 

\bigskip 

\captionof{table}{The Numerator Convergent Functions, $\ConvP_{2,h}(x, z)$, 
                  Generating the Binomial Coefficients, $\binom{x}{n}$} 
\label{table_ConvP2v2hxz_SpCases} 

\end{framed} 
\end{table} 

\begin{table}[ht] 
\begin{framed} 
\centering\small

\begin{tabular}{||c|l||} \hline\hline 
$h$ & $\ConvQ_{2,h}(x, z)$ \\ \hline
1 & $1-xz$ \\
2 & $1-\frac{2}{3} (x-1) z+\frac{1}{6} (x-1) x z^2$ \\
3 & $1-\frac{3}{5} (x-2) z+\frac{3}{20} (x-2) (x-1) z^2- 
     \frac{1}{60} (x-2) (x-1) x z^3$ \\
4 & $1-\frac{4}{7} (x-3) z+\frac{1}{7} (x-3) (x-2) z^2- 
     \frac{2}{105} (x-3) (x-2) (x-1) z^3+ 
     \frac{1}{840} (x-3) (x-2) (x-1) x z^4$ \\
5 & $1-\frac{5}{9} (x-4) z+\frac{5}{36} (x-4) (x-3) z^2- 
     \frac{5}{252} (x-4) (x-3) (x-2) z^3+ 
     \frac{5 (x-4) (x-3) (x-2) (x-1)}{3024} z^4$ \\ 
  & $\phantom{1} - 
     \frac{(x-4) (x-3) (x-2) (x-1) x}{15120} z^5$ \\
6 & $1-\frac{6}{11} (x-5) z+\frac{3}{22} (x-5) (x-4) z^2- 
     \frac{2}{99} (x-5) (x-4) (x-3) z^3$ \\ 
  & $\phantom{1} + 
     \frac{1}{528} (x-5) (x-4) (x-3) (x-2) z^4- 
     \frac{(x-5) (x-4) (x-3) (x-2) (x-1)}{9240} z^5$ \\ 
  & $\phantom{1} + 
     \frac{(x-5) (x-4) (x-3) (x-2) (x-1) x}{332640} z^6$ \\
\hline\hline 
\end{tabular} 

\bigskip 

\captionof{table}{The Denominator Convergent Functions, $\ConvQ_{2,h}(x, z)$, 
                  Generating the Binomial Coefficients, $\binom{x}{n}$} 
\label{table_ConvQ2v2hxz_SpCases} 

\end{framed} 
\end{table} 

\begin{prop}[Convergent Function Formulas] 
For all $h \geq 2$ and a fixed indeterminate $x$, the numerator and denominator 
convergent functions, $\ConvP_{2,h}(x, z)$ and $\ConvQ_{2,h}(x, z)$, each 
satisfy the following respective formulas: 
\begin{align*} 
\ConvP_{2,h}(x, z) & = \sum_{n=0}^{h-1} \binom{x+h}{n} 
     \frac{(h-1)!}{(h-1-n)!} \frac{(2h-1-n)!}{(2h-1)!} z^n \\ 
     & = 
     \sum_{n=0}^{h-1} \binom{x+h}{n} \binom{h-1}{n} \binom{2h-1}{n}^{-1} z^n \\ 
\ConvQ_{2,h}(x, z) & = 
     \sum_{i=0}^{h} \binom{x-h+i}{i} 
     \frac{h!}{(h-i)!} \frac{(2h-1-i)!}{(2h-1)!} (-z)^i. 
\end{align*} 
\end{prop}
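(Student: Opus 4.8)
The plan is to avoid repeating the inductive computations of Proposition \ref{prop_NumConvFn_Formula} and Proposition \ref{prop_DenomConvFn_Formula}, and instead to exploit a symmetry between the two families of component sequences. The key observation is that the sequences of Definition \ref{def_CompSeqs_ConvFns_BinCoeff_xn_v2} are obtained from those of Definition \ref{def_ComponentSeqs_ConvFns_BinomCoeff_xpnn} under the substitution $x \mapsto -1-x$. Concretely, I would first check by direct algebra that $c_{x,i}^{(2)} = c_{-1-x,i}^{(1)}$ and $\ab_{x,i}^{(2)} = \ab_{-1-x,i}^{(1)}$ for every $i \geq 1$. For the $c$-sequences this is the identity $x + 2(i-1)^2 = 1 + 2(i-2)i - (-1-x)$, and for the $\ab$-sequences one uses that the quadratic $(x-i+2)(x+i-1) = x^2 + x - (i-1)(i-2)$ is invariant under $x \mapsto -1-x$ (since $x^2 + x$ is), which simultaneously handles the generic case $i \geq 3$ and, after setting $i = 2$, the term $-\frac{1}{2}x(x+1)$.

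With the symmetry of the component data in hand, the second step is to promote it to the convergent functions. Both recurrences \eqref{eqn_ConvFns_BinomCoeff_xn} and \eqref{eqn_ConvFns_BinomCoeff_xpnn} have exactly the same shape, and their inhomogeneous (initial-condition) contributions are independent of $x$ except through the single factor $1 - c_{x,1}z$ appearing in the $h = 1$ term of the denominator recurrence. Since that factor also respects the substitution, because $c_{x,1}^{(2)} = x = c_{-1-x,1}^{(1)}$, a routine induction on $h$ shows that $\ConvP_{2,h}(x,z) = \ConvP_{1,h}(-1-x,z)$ and $\ConvQ_{2,h}(x,z) = \ConvQ_{1,h}(-1-x,z)$ for all $h \geq 0$. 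At this point Proposition \ref{prop_NumConvFn_Formula} and Proposition \ref{prop_DenomConvFn_Formula} already furnish closed forms for the right-hand sides, so only a change of variable remains.

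The final step is to simplify those closed forms using the upper-negation identity $\binom{-m}{n} = (-1)^n\binom{m+n-1}{n}$, valid as a polynomial identity in the indeterminate $x$. Applying it to the factor $\binom{(-1-x)+n-h}{n}$ appearing in $\ConvP_{1,h}(-1-x,z)$ converts it to $(-1)^n\binom{x+h}{n}$, and the sign $(-1)^n$ cancels against the $(-z)^n$ of Proposition \ref{prop_NumConvFn_Formula}, producing exactly the stated formula for $\ConvP_{2,h}(x,z)$ once the factorial weight is rewritten as $\binom{h-1}{n}\binom{2h-1}{n}^{-1}$. Likewise, applying the identity to $\binom{(-1-x)+h}{i}$ in $\ConvQ_{1,h}(-1-x,z)$ yields $(-1)^i\binom{x-h+i}{i}$, which reinstates the alternating factor $(-z)^i$ demanded by the claimed formula for $\ConvQ_{2,h}(x,z)$.

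I expect the only genuinely delicate points to be bookkeeping ones: confirming the invariance of $\ab_{x,i}^{(1)}$ at the boundary value $i = 2$ rather than only in the generic range $i \geq 3$, and tracking the parity of the sign factors so that the $(-z)^n$ weighting in one family matches the $z^n$ weighting in the other. Should the symmetry argument be deemed less self-contained than desired, the fallback is the direct induction on $h$ used for Proposition \ref{prop_NumConvFn_Formula} and Proposition \ref{prop_DenomConvFn_Formula}: expand \eqref{eqn_ConvFns_BinomCoeff_xn} using the formulas at orders $h-1$ and $h-2$, reindex the resulting sums, and collapse the interior coefficient of $z^n$ (respectively $(-z)^i$) via the same hypergeometric-type simplifications verified symbolically there; in that route the interior-coefficient identity is the computational crux.
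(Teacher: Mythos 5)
Your proposal is correct, and it takes a genuinely different route from the paper, which in fact offers no proof of this proposition at all: it declares the arguments ``almost identical'' to the inductive proofs of Proposition \ref{prop_NumConvFn_Formula} and Proposition \ref{prop_DenomConvFn_Formula} and omits them, the intended method being to repeat the induction on $h$ against the recurrence \eqref{eqn_ConvFns_BinomCoeff_xn} with the interior coefficients again collapsed symbolically. You instead deduce the proposition from the already-proved Section \ref{Section_BinomCoeffs_xpnn} formulas via the substitution $x \mapsto -1-x$, and both pillars of that deduction check out. First, $c_{x,i}^{(2)} = c_{-1-x,i}^{(1)}$ because $1+2(i-2)i+1+x = x+2(i-1)^2$, and $\ab_{x,i}^{(2)} = \ab_{-1-x,i}^{(1)}$ because $(x-i+2)(x+i-1) = x^2+x-(i-1)(i-2)$ depends on $x$ only through the invariant combination $x^2+x$, which also covers the boundary case $i=2$; since the recurrences \eqref{eqn_ConvFns_BinomCoeff_xpnn} and \eqref{eqn_ConvFns_BinomCoeff_xn} have identical shape, a routine induction then gives $\ConvP_{2,h}(x,z) = \ConvP_{1,h}(-1-x,z)$ and $\ConvQ_{2,h}(x,z) = \ConvQ_{1,h}(-1-x,z)$, consistent with the tables (e.g.\ $\ConvQ_{1,1}(-1-x,z) = 1-xz = \ConvQ_{2,1}(x,z)$); note that the two recurrences even share the same apparent misprint of $h-1$ where $h-2$ is meant in the second-order term, so your transfer argument is valid under either reading. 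Second, the upper-negation identity yields $\binom{(-1-x)+n-h}{n} = (-1)^n\binom{x+h}{n}$ and $\binom{(-1-x)+h}{i} = (-1)^i\binom{x-h+i}{i}$, so the sign $(-1)^n$ cancels the $(-z)^n$ of Proposition \ref{prop_NumConvFn_Formula} while the sign $(-1)^i$ recreates the $(-z)^i$ demanded in the denominator formula, exactly as you say. What your route buys is a complete, self-contained proof with no duplicated induction and no new computer-algebra verification, together with a conceptual explanation of why the two J-fraction families share the same $\ab$-sequence; what the paper's (omitted) route would buy is only uniformity of presentation, at the cost of redoing the symbolic simplifications that were already delegated to \emph{Mathematica} in Section \ref{Section_BinomCoeffs_xpnn}.
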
 

\begin{theorem}[Main Theorem II] 
For all integers $h \geq 2$ and $0 \leq n \leq h$, we have that the $h^{th}$ 
convergent functions, $\Conv_{2,h}(x, z)$, exactly generate the binomial 
coefficients $\binom{x}{n}$ as $[z^n] \Conv_{2,h}(x, z) = \binom{x}{n}$. 
\end{theorem}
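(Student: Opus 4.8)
The plan is to follow the template of the proof of Theorem~\ref{theorem_MainThmI} essentially verbatim, now feeding the two convergent function formulas stated in the preceding proposition into the roles played by Proposition~\ref{prop_NumConvFn_Formula} and Proposition~\ref{prop_DenomConvFn_Formula} in Section~\ref{Section_BinomCoeffs_xpnn}. Since $\ConvP_{2,h}(x, z)$ and $\ConvQ_{2,h}(x, z)$ are polynomials in $z$ for each fixed $h$, the convergent $\Conv_{2,h}(x, z)$ is rational in $z$, so its coefficients satisfy a finite difference equation of order at most $h$ obtained by clearing denominators in $\ConvP_{2,h}(x, z) = \Conv_{2,h}(x, z) \cdot \ConvQ_{2,h}(x, z)$ and extracting $[z^n]$ from both sides \citep[\S 2.3]{GFLECT}. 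Writing $q_i := [z^i] \ConvQ_{2,h}(x, z) = (-1)^i \binom{x-h+i}{i} \frac{h!}{(h-i)!} \frac{(2h-1-i)!}{(2h-1)!}$, so that $q_0 = 1$ and $q_i = 0$ for $i > h$, this reads
\begin{align}
\label{eqn_ThmII_fde}
[z^n] \Conv_{2,h}(x, z) & = [z^n] \ConvP_{2,h}(x, z) -
     \sum_{i=1}^{\min(n, h)} q_i \cdot [z^{n-i}] \Conv_{2,h}(x, z).
\end{align}
As in the proof of Theorem~\ref{theorem_MainThmI}, I would verify the claim $[z^n]\Conv_{2,h}(x,z) = \binom{x}{n}$ separately in the two cases $0 \leq n < h$ and $n = h$.

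For the range $0 \leq n < h$ I would induct on $n$. The base case $[z^0] \Conv_{2,h}(x, z) = \ConvP_{2,h}(x, 0) / \ConvQ_{2,h}(x, 0) = 1 = \binom{x}{0}$ is immediate from the two formulas. Assuming $[z^k] \Conv_{2,h}(x, z) = \binom{x}{k}$ for all $k < n$, I would substitute these values into the sum in \eqref{eqn_ThmII_fde}; the sum then telescopes against $[z^n] \ConvP_{2,h}(x, z)$ once one has the convolution identity
\begin{align}
\label{eqn_ThmII_key}
\binom{x+h}{n} \binom{h-1}{n} \binom{2h-1}{n}^{-1} & =
     \sum_{i=0}^{n} \binom{x}{n-i} q_i,
\end{align}
which is the exact counterpart of \eqref{eqn_PropPhxz_alt_stmt} from the proof of Proposition~\ref{prop_NumConvFn_Formula}. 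Indeed, the left-hand side of \eqref{eqn_ThmII_key} equals $[z^n] \ConvP_{2,h}(x, z)$, so after using \eqref{eqn_ThmII_key} to replace it by $\sum_{i=0}^n q_i \binom{x}{n-i}$ and subtracting the $i \geq 1$ terms in \eqref{eqn_ThmII_fde}, only the term $q_0 \binom{x}{n} = \binom{x}{n}$ survives.

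For the remaining case $n = h$, the numerator $\ConvP_{2,h}(x, z)$ has degree $h-1$, so $[z^h] \ConvP_{2,h}(x, z) = 0$. Using the already-established values $[z^{h-i}] \Conv_{2,h}(x, z) = \binom{x}{h-i}$ for $1 \leq i \leq h$, equation \eqref{eqn_ThmII_fde} gives $[z^h] \Conv_{2,h}(x, z) = -\sum_{i=1}^{h} q_i \binom{x}{h-i}$, so the claim $[z^h] \Conv_{2,h}(x, z) = \binom{x}{h}$ reduces to the single vanishing identity
\begin{align}
\label{eqn_ThmII_vanish}
\sum_{i=0}^{h} (-1)^i \binom{x}{h-i} \binom{x-h+i}{i}
     \frac{h!}{(h-i)!} \frac{(2h-1-i)!}{(2h-1)!} & = 0,
\end{align}
the analog of the footnoted sum in the proof of Theorem~\ref{theorem_MainThmI}. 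I expect the main obstacle to be precisely the two summation identities \eqref{eqn_ThmII_key} and \eqref{eqn_ThmII_vanish}: each is a terminating single-variable hypergeometric sum (a normalized ${}_3F_2$ evaluated at $1$), and I would certify both by Zeilberger's algorithm or by direct evaluation in \emph{Mathematica}, exactly as the corresponding identities were handled in Section~\ref{Section_BinomCoeffs_xpnn}. Once these are in hand, the induction on $n$ and the $n = h$ step are purely formal bookkeeping, and the theorem follows.
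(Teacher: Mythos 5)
Your proposal is correct and is precisely the argument the paper intends: the paper omits the proof of Main Theorem II, stating it is almost identical to that of Theorem \ref{theorem_MainThmI}, and your adaptation (the finite difference equation from rationality, induction on $n$ via the convolution identity playing the role of \eqref{eqn_PropPhxz_alt_stmt}, and a terminating hypergeometric vanishing identity for the $n = h$ case, certified by computer algebra) mirrors that proof step for step. No gaps; the two identities you isolate do hold, as spot-checks at small $h$ confirm.
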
 

As in Section \ref{Section_BinomCoeffs_xpnn}, we remark that 
one consequence of the second main theorem 
resulting from the proposition above is that we have a new proof of the next 
exact formula generating the binomial coefficients, $\binom{x}{n}$, for any 
$x$ and all $n \geq 0$. 
\begin{align*} 
\binom{x}{n} & = \sum_{i=1}^{n} \binom{x-n+i}{i} \binom{x}{n-i} 
     \binom{n}{i} \binom{2n-1}{i}^{-1} (-1)^{i+1} + 
     \Iverson{n = 0} 
\end{align*} 

\begin{cor}[Congruences for $\binom{x}{n}$ Modulo Integers $h \geq 2$] 
Let the $h^{th}$ modulus, $\lambda_h(x)$, of the J-fraction defined by 
Definition \ref{def_CompSeqs_ConvFns_BinCoeff_xn_v2} correspond to the 
definitions given in Corollary \ref{cor_Congruences_BinomCoeff_xpnn}. 
For all $h \geq 2$, $m \geq h$, $0 \leq x \leq h$, and $n \geq 0$, 
whenever $\lambda_m(x) \in \mathbb{Z}$ and $h \mid \lambda_m(x)$, we have the 
following congruences congruences for the binomial coefficients, 
$\binom{x}{n}$, modulo $h$: 
\begin{align*} 
\binom{x}{n} & \equiv \sum_{i=1}^{n} \binom{x-m+i}{i} \binom{x}{n-i} \cdot 
     \frac{m!}{(m-i)!} \cdot \frac{(2m-1-i)!}{(2m-1)!} (-1)^{i+1} \\ 
     & \phantom{\equiv\ } + 
     \binom{x+m}{n} \binom{m-1}{n} \binom{2m-1}{n}^{-1} 
     \Iverson{0 \leq n < m} && \pmod{h}. 
\end{align*} 

\end{cor}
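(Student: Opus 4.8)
The plan is to transcribe the proof of Corollary~\ref{cor_Congruences_BinomCoeff_xpnn} almost verbatim, replacing the order-one data with the order-two data of Definition~\ref{def_CompSeqs_ConvFns_BinCoeff_xn_v2}, and noting at the outset that the $\ab$-sequences in the two definitions are identical, so that the modulus $\lambda_m(x) = \prod_{i=2}^{m} \ab_{x,i}^{(2)}$ is literally the same quantity as in Corollary~\ref{cor_Congruences_BinomCoeff_xpnn} and plays the role of the $m^{th}$ modulus $M_m$ from property~(C) of Section~\ref{subSection_Intro_J-fraction_exps}. First I would record the finite difference equation for the coefficients of $\Conv_{2,m}(x, z)$ that is the exact analog of \eqref{eqn_ProofOfThmI_ConvFn_FiniteDiffEqn}. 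Since $\Conv_{2,m}(x, z) = \ConvP_{2,m}(x, z) / \ConvQ_{2,m}(x, z)$ is rational in $z$, I would extract $[z^n]$ from the identity $\ConvQ_{2,m}(x, z)\,\Conv_{2,m}(x, z) = \ConvP_{2,m}(x, z)$, using the denominator formula $[z^i]\ConvQ_{2,m}(x, z) = \binom{x-m+i}{i}\tfrac{m!}{(m-i)!}\tfrac{(2m-1-i)!}{(2m-1)!}(-1)^i$ and the fact that its $i=0$ term equals $1$, to solve for the top coefficient:
\begin{align*}
[z^n] \Conv_{2,m}(x, z) &= [z^n] \ConvP_{2,m}(x, z)\,\Iverson{0 \leq n < m} \\
&\phantom{=} - \sum_{i=1}^{\min(n, m)} \binom{x-m+i}{i} \frac{m!}{(m-i)!}\frac{(2m-1-i)!}{(2m-1)!}(-1)^{i}\,[z^{n-i}] \Conv_{2,m}(x, z),
\end{align*}
where the numerator formula from the proposition supplies $[z^n]\ConvP_{2,m}(x, z) = \binom{x+m}{n}\binom{m-1}{n}\binom{2m-1}{n}^{-1}$ on the range $0 \leq n < m$ and zero otherwise.

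Next I would invoke property~(C) of Section~\ref{subSection_Intro_J-fraction_exps}. Under the stated hypotheses that $\lambda_m(x)\in\mathbb{Z}$ and $h \mid \lambda_m(x)$ with $m \geq h$, property~(C) gives $[z^n] J^{[\infty]}(x, z) \equiv [z^n] \Conv_{2,m}(x, z) \pmod{h}$ for all $n \geq 0$; since the infinite J-fraction of Definition~\ref{def_CompSeqs_ConvFns_BinCoeff_xn_v2} generates $\binom{x}{n}$ for every $n \geq 0$ (the full strength of Main Theorem~II, just as in the remark following Theorem~\ref{theorem_MainThmI}), this reads $[z^n]\Conv_{2,m}(x, z) \equiv \binom{x}{n}\pmod{h}$, and in particular $[z^{n-i}]\Conv_{2,m}(x, z) \equiv \binom{x}{n-i}$ for each index $i$ in the sum. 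Substituting these congruences into the difference equation above, rewriting $-(-1)^i = (-1)^{i+1}$, and observing that $\tfrac{m!}{(m-i)!}$ vanishes whenever $i > m$ so that the summation bound $\min(n, m)$ may be replaced by $n$, produces exactly the claimed congruence for $\binom{x}{n}$ modulo $h$.

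The two points requiring care, rather than genuine difficulty, are the applicability of property~(C) and the meaningfulness of the rational coefficients modulo $h$. The hypothesis $\lambda_m(x)\in\mathbb{Z}$ is precisely what guarantees the $m^{th}$ modulus of the J-fraction is integer-valued so that the cited congruence theorem of \citep{FLAJOLET82}~\citep[\S 5.7]{GFLECT} applies, while $0 \leq x \leq h$ keeps every binomial argument in the integer range where the expansions of Main Theorem~II are valid. I expect the only mild obstacle to be the bookkeeping needed to confirm that the factors $\tfrac{(2m-1-i)!}{(2m-1)!}$ and $\binom{2m-1}{n}^{-1}$ combine with their neighboring binomials into $h$-integral quantities, so that reduction modulo $h$ is well defined; this is handled exactly as in the proof of Corollary~\ref{cor_Congruences_BinomCoeff_xpnn}, and no new idea beyond the $\binom{x+n}{n}$ case is needed.
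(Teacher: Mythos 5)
Your proposal is correct and takes essentially the same approach the paper intends: the paper omits this proof as ``almost identical'' to that of Corollary \ref{cor_Congruences_BinomCoeff_xpnn}, whose proof is exactly the combination you give of the rationality-based finite difference equation (here the order-two analog of \eqref{eqn_ProofOfThmI_ConvFn_FiniteDiffEqn}) with the J-fraction congruence property (C) of Section \ref{subSection_Intro_J-fraction_exps}. Your added observations --- that the $\ab$-sequences of the two definitions coincide so $\lambda_m(x)$ is literally the same modulus, and that the falling-factorial factor $\frac{m!}{(m-i)!}$ vanishes for $i > m$ so the bound $\min(n,m)$ can be replaced by $n$ --- are precisely the bookkeeping details the paper leaves to the reader.
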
 

\begin{remark}[Exact Congruences for Special Cases] 
We again conjecture that for all $h \geq 2$ and all $x, n \geq 0$, we have 
congruences for the binomial coefficients, $\binom{x}{n}$, of the form 
\begin{align*} 
\binom{x}{n} & \equiv \sum_{i=1}^{h} \binom{x-h+i}{i} \binom{x}{n-i} 
     \binom{h}{i} \binom{2h-1}{i}^{-1} (-1)^{i+1} \\ 
     & \phantom{\equiv\sum\ } + 
     \binom{x+h}{n} \binom{h-1}{n} \binom{2h-1}{n}^{-1} && \pmod{h}. 
\end{align*} 
Using this result, we can then expand the first several special cases of these 
congruences as follows: 
\begin{align*} 
\binom{x}{n} & \equiv 
     \frac{2(x-1)}{3} \binom{x}{n-1} - \frac{x(x-1)}{6} \binom{x}{n-2} + 
     \frac{(n-2)(n-3)}{6} \binom{x+2}{n} \Iverson{n \leq 1} && \pmod{2} \\ 
\binom{x}{n} & \equiv 
     \frac{3(x-2)}{5} \binom{x}{n-1} - \frac{3 (x-1)(x-2)}{20} \binom{x}{n-2} + 
     \frac{x(x-1)(x-2)}{60} \binom{x}{n-3} \\ 
     & \phantom{\equiv \frac{3(x-2)}{5} \binom{x}{n-1}\ } - 
     \frac{(n-3)(n-4)(n-5)}{60} \binom{x+3}{n} \Iverson{n \leq 2} && \pmod{3} \\ 
\binom{x}{n} & \equiv 
     \frac{4(x-3)}{7} \binom{x}{n-1} - \frac{(x-2)(x-3)}{7} \binom{x}{n-2} + 
     \frac{2 (x-1)(x-2)(x-3)}{105} \binom{x}{n-3} \\ 
     & \phantom{\equiv \frac{3(x-2)}{5} \binom{x}{n-1}\ } - 
     \frac{x(x-1)(x-2)(x-3)}{840} \binom{x}{n-4} \\ 
     & \phantom{\equiv \frac{3(x-2)}{5} \binom{x}{n-1}\ } + 
     \frac{(n-4)(n-5)(n-6)(n-7)}{840} \binom{x+4}{n} \Iverson{n \leq 3} && \pmod{4} \\ 
\binom{x}{n} & \equiv 
     \frac{5(x-4)}{9} \binom{x}{n-1} - \frac{5(x-3)(x-4)}{36} \binom{x}{n-2} + 
     \frac{5 (x-2)(x-3)(x-4)}{252} \binom{x}{n-3} \\ 
     & \phantom{\equiv \frac{3(x-2)}{5} \binom{x}{n-1}\ } - 
     \frac{5(x-1)(x-2)(x-3)(x-4)}{3024} \binom{x}{n-4} \\ 
     & \phantom{\equiv \frac{3(x-2)}{5} \binom{x}{n-1}\ } - 
     \frac{x(x-1)(x-2)(x-3)(x-4)}{15120} \binom{x}{n-5} \\ 
     & \phantom{\equiv \frac{3(x-2)}{5} \binom{x}{n-1}\ } - 
     \frac{(n-5)(n-6)(n-7)(n-8)(n-9)}{15120} \binom{x+5}{n} 
     \Iverson{n \leq 4} && \pmod{5}. 
\end{align*} 
\end{remark} 

\begin{cor}[Reduction Formulas for $\binom{x}{n}$] 
\label{cor_BinCoeffV2_Addition-ReductionFormulas} 
Let the functions, $k_{r,s}(x)$ be defined for all $s \geq r \geq 0$ as 
\begin{align*} 
k_{r,s}(x) & = \binom{x-r}{s-r} \binom{2r}{r} \binom{r+s}{r}^{-1}. 
\end{align*} 
We have a corresponding addition, or lower index reduction, formula for the binomial 
coefficients, $\binom{x}{p+q}$, given by the following equations for 
integers $p, q \geq 0$: 
\begin{align*} 
\binom{x}{p+q} & = k_{0,p}(x) k_{0,q}(x) + \sum_{i=1}^{\max(p, q)} 
     \lambda_{i+1}(x) k_{i,p}(x) k_{i,q}(x) \\ 
     & = 
     \binom{x}{p} \binom{x}{q} + \sum_{i=1}^{\max(p, q)} 
     \frac{(-1)^i \binom{x-i}{p-i} \binom{x-i}{q-i} \binom{x+i}{i} 
     \binom{x}{i} \binom{2i}{i}^2}{ 
     2 \cdot \binom{p+i}{i} \binom{q+i}{i} \binom{2i-1}{i-1}^{2}}. 
\end{align*} 
\end{cor}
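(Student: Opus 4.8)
The plan is to follow, essentially line for line, the matrix-method argument used to prove Corollary~\ref{cor_BinCoeffV1_Addition-ReductionFormulas}, now applied to the J-fraction of Definition~\ref{def_CompSeqs_ConvFns_BinCoeff_xn_v2}. First I would record that by Main Theorem II the ordinary generating function $\sum_{n\geq 0}\binom{x}{n}z^n$ is exactly the infinite J-fraction built from the component sequences $c_{x,i}^{(2)}$ and $\ab_{x,i}^{(2)}$; that is, $\binom{x}{n}$ is the $n$th moment of this fraction with no auxiliary sign, since Main Theorem II gives $[z^n]\ConvP_{2,h}(x,z)/\ConvQ_{2,h}(x,z)=\binom{x}{n}$ directly (rather than through a $-z$ substitution as in the $\binom{x+n}{n}$ case). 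This lets me invoke the linearization / addition theorem for J-fraction moments from \citep[\S 1, p.\ 133]{FLAJOLET80B} \citep[\S 11]{WALL-CFRACS} in the bilinear form $\binom{x}{p+q}=\sum_{r\geq 0}\lambda_{r+1}(x)\,\widetilde{k}_{r,p}(x)\,\widetilde{k}_{r,q}(x)$, where $\lambda_{r+1}(x)=\ab_{x,2}^{(2)}\cdots\ab_{x,r+1}^{(2)}$ is the $(r+1)$st modulus from Corollary~\ref{cor_Congruences_BinomCoeff_xpnn} (identical to the $\binom{x+n}{n}$ case, since $\ab_{x,i}^{(2)}=\ab_{x,i}^{(1)}$), and the auxiliary functions $\widetilde{k}_{r,s}(x)$ solve the same tridiagonal matrix equation~\eqref{eqn_krpx_matrix_eqn} but with initial row $\widetilde{k}_{0,p}(x):=\binom{x}{p}$ and with the diagonal entries $c_{x,i}$ replaced by $c_{x,i}^{(2)}$. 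Since $\widetilde{k}_{r,s}(x)=0$ whenever $r>s$, only finitely many terms of the sum survive.

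The heart of the proof is then to identify $\widetilde{k}_{r,s}(x)$ with the claimed closed form $k_{r,s}(x)=\binom{x-r}{s-r}\binom{2r}{r}\binom{r+s}{r}^{-1}$. As in the earlier corollary, reading off entries of \eqref{eqn_krpx_matrix_eqn} gives the two recurrences $\widetilde{k}_{1,p}(x)=(\ab_{x,2}^{(2)})^{-1}\bigl(\widetilde{k}_{0,p+1}(x)-c_{x,1}^{(2)}\widetilde{k}_{0,p}(x)\bigr)$ for $p\geq 1$, and $\widetilde{k}_{r+1,p}(x)=(\ab_{x,r+2}^{(2)})^{-1}\bigl(\widetilde{k}_{r,p+1}(x)-\widetilde{k}_{r-1,p}(x)-c_{x,r+1}^{(2)}\widetilde{k}_{r,p}(x)\bigr)$ for $p>r\geq 1$, and I would prove $\widetilde{k}_{r,s}(x)=k_{r,s}(x)$ by induction on $r$. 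The base cases $r=0,1$ are direct: $k_{0,s}(x)=\binom{x}{s}$ matches the initial row, and substituting $c_{x,1}^{(2)}=x$ and $\ab_{x,2}^{(2)}=-\tfrac{1}{2}x(x+1)$ into the first recurrence and simplifying $\binom{x}{p+1}-x\binom{x}{p}$ gives $\widetilde{k}_{1,p}(x)=\tfrac{2}{p+1}\binom{x-1}{p-1}=k_{1,p}(x)$.

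For the inductive step, assuming $\widetilde{k}_{r-1,s}=k_{r-1,s}$ and $\widetilde{k}_{r,s}=k_{r,s}$ for all $s$, I would substitute these closed forms together with the explicit values $c_{x,r+1}^{(2)}=-\tfrac{x+2r^2}{(2r+1)(2r-1)}$ and $\ab_{x,r+2}^{(2)}=-\tfrac{(x-r)(x+r+1)}{4(2r+1)^2}$ into the second recurrence, factor out $k_{r+1,p}(x)=\binom{x-r-1}{p-r-1}\binom{2r+2}{r+1}\binom{p+r+1}{r+1}^{-1}$, and verify that the remaining rational factor collapses to $1$. This is precisely the analogue of the three-term bracketed simplification carried out for $\widetilde{k}_{r+1,p}(x)$ at the end of the proof of Corollary~\ref{cor_BinCoeffV1_Addition-ReductionFormulas}, the only change being the different numerator of $c_{x,r+1}^{(2)}$. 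This single rational-function identity in the variables $x,p,r$ is the one genuinely computational obstacle, and I would discharge it by exact simplification in \emph{Mathematica}, exactly as the paper does for its companion identities.

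Finally, with $\widetilde{k}_{r,s}(x)=k_{r,s}(x)$ established, the first displayed equality of the corollary is immediate from the bilinear addition theorem recorded above. For the second displayed equality I would substitute $\lambda_{i+1}(x)=\tfrac{(-1)^i}{2}\binom{x+i}{i}\binom{x}{i}\binom{2i-1}{i-1}^{-2}$ from Corollary~\ref{cor_Congruences_BinomCoeff_xpnn} together with the closed forms $k_{i,p}(x)=\binom{x-i}{p-i}\binom{2i}{i}\binom{p+i}{i}^{-1}$ and $k_{i,q}(x)=\binom{x-i}{q-i}\binom{2i}{i}\binom{q+i}{i}^{-1}$, so that the two factors of $\binom{2i}{i}$ combine into $\binom{2i}{i}^2$ and the remaining binomials collect into the stated quotient. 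This last step is routine bookkeeping of factorials, and—because the moments $\binom{x}{n}$ enter without the alternating sign $(-1)^{p+q}$ present in the $\binom{x+n}{n}$ version—no sign tracking is required.
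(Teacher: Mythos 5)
Your proposal is correct and takes essentially the same route as the paper: the paper omits the proof of this corollary precisely because it is the verbatim adaptation of the matrix-method argument proving Corollary \ref{cor_BinCoeffV1_Addition-ReductionFormulas}, which is exactly what you carry out. Your key computations check out --- $c_{x,1}^{(2)}=x$ and $\ab_{x,2}^{(2)}=-\tfrac{1}{2}x(x+1)$ do give $\widetilde{k}_{1,p}(x)=\tfrac{2}{p+1}\binom{x-1}{p-1}=k_{1,p}(x)$, the inductive step reduces to a rational-function identity exactly analogous to the one in the first corollary, and no sign $(-1)^{p+q}$ enters because Main Theorem II expands the convergents at $+z$ rather than $-z$.
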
 
We can again compare the statements of the two summation formulas in the 
corollary to the binomial coefficient expansions in the statement of 
Lucas' theorem from the introduction modulo any prime $p$. 

\section{Conclusions} 

We have established the forms of two new Jacobi-type J-fraction expansions 
providing ordinary generating functions for the binomial coefficients, 
$\binom{x+n}{n}$ and $\binom{x}{n}$, for any $x$ and all $n \geq 0$. 
The key ingredients to the proofs of these series expansions are the 
rationality of the $h^{th}$ convergents, $\Conv_{i,h}(x, z)$, for all $h \geq 1$ and 
closed-form formulas for the corresponding numerator and denominator function 
sequences, which are finite-degree polynomials in $z$ with 
$\deg_z \{\ConvP_{i,h}(x, z)\} = h-1$ and $\deg_z \{\ConvQ_{i,h}(x, z)\} = h$ for 
all $h \geq 1$ when $i = 1, 2$. 
The finite difference equations implied by the rationality of the $h^{th}$ 
convergent functions at each $h$ lead to new exact formulas for, and new 
congruences satisfied by, the two binomial coefficient variants studied within the 
article. 
We note that unlike the J-fraction expansions enumerating the generalized 
factorial functions studied in the reference \citep{MULTIFACT-CFRACS}, the 
$h^{th}$ modulus, $\lambda_h(x)$, of these J-fractions defined in 
Corollary \ref{cor_Congruences_BinomCoeff_xpnn} is not strictly integer-valued 
for all $x$ and $h$, which complicates the formulations of the 
new congruence properties for the binomial coefficient variants considered in the 
article. 

We also compare the two forms of the addition, or \emph{reduction} of index, 
formulas for these coefficients stated in 
Corollary \ref{cor_BinCoeffV1_Addition-ReductionFormulas} and in 
Corollary \ref{cor_BinCoeffV2_Addition-ReductionFormulas} to the forms of 
Lucas's theorem and to the prime power congruences cited in the introduction, 
which similarly reduce the upper and lower indices to these sequences 
with respect to powers of prime moduli. 
New research directions based on the results in this article include 
further study of the binomial coefficient congruences given in 
Section \ref{Section_BinomCoeffs_xpnn} and 
Section \ref{Section_BinomCoeffs_xn} modulo composite integers $h \geq 4$. 
In particular, one direction for future research based on these topics is to 
find exact formulas for doubly-indexed sequences of multipliers, 
$\widetilde{m}_{i,h,n}$, such that 
$\binom{x+n}{n} \equiv \widetilde{m}_{1,h,n} [z^n] \Conv_{1,h}(x, z) \pmod{h}$ 
and 
$\binom{x}{n} \equiv \widetilde{m}_{2,h,n} [z^n] \Conv_{2,h}(x, z) \pmod{h}$ 
for all integers $x, n \geq 0$. 
This application is surely a non-trivial task worthy of further study and 
consideration based on the results we prove here.  



\renewcommand{\refname}{References} 

\end{document}